\newtheorem{Thm}{Theorem}[section]
\newtheorem{Lem}[Thm]{Lemma}
\newtheorem{Prop}[Thm]{Proposition}
\newtheorem{Cor}[Thm]{Corollary}
\newcommand{\C}{\mathbb{C}}
\begin{document}

\newpage
\title{Explicit formulae for Chern-Simons invariants of the twist knot orbifolds and Edge polynomials of twist knots}
\author{Ji-Young Ham, Joongul Lee}
\address{Department of Science, Hongik University, 
94 Wausan-ro, Mapo-gu, Seoul,
 121-791\\
   Department of Mathematical Sciences, Seoul National University, 
   San 56-1 Shinrim-dong Kwanak-gu Seoul 151-747 \\
   Korea} 
\email{jiyoungham1@gmail.com.}

\address{Department of Mathematics Education, Hongik University, 
94 Wausan-ro, Mapo-gu, Seoul,
 121-791\\
   Korea} 
\email{jglee@hongik.ac.kr}

\subjclass[2010]{57M25, 51M10, 57M27, 57M50.}

\keywords{Chern-Simons invariant, twist knot, orbifold, A-polynomial, edge polynomial}

\maketitle 

\markboth{ Ji-young Ham, Joongul Lee } 
{ Chern-Simons invariants and edge polynomials }
 
\begin{abstract}
 We calculate the Chern-Simons invariants of the twist knot orbifolds using the Schl\"{a}fli formula for the generalized Chern-Simons function on the family of the twist knot cone-manifold structures. Following the general instruction of Hilden, Lozano, and Montesinos-Amilibia, we here present the concrete formulae and calculations. We use the Pythagorean Theorem, which was used by Ham, Mednykh, and Petrov, to relate the complex length of the longitude and the complex distance between the two axes fixed by two generators.
As an application, we  calculate the Chern-Simons invariants of cyclic coverings of the hyperbolic twist knot orbifolds.
 We also derive some interesting results. The explicit formula of $A$-polynomials of twist knots are obtained from the complex distance polynomials. Hence the edge polynomials corresponding to the edges of the Newton polygons of A-polynomials of twist knots can be obtained. In particular, the number of boundary components of every incompressible surface corresponding to slope $-4n+2$ appears to be $2$.
\end{abstract}
\maketitle

\section{Introduction}
In the 1970s, Chern and Simons~\cite{CS} defined an invariant of a compact $3\ (\text{mod}\ 4)$ - dimensional Riemannian manifold, M, which is now called the Chern-Simons invariant, $\text{\textnormal{cs}}(M)$. In the 1980s, Meyerhoff~\cite{Mey1} extended the definition of $\text{\textnormal{cs}}(M)$ to cusped manifolds. It is the integral of a certain 3-form and an invariant of the Riemannian connection on a principal tangent bundle of M. 

Various methods of finding the Chern-Simons invariant using ideal triangulations have been introduced~\cite{N1,N2,Z1,CMY1,CM1,CKK1} and implemented~\cite{SnapPy, Snap}.  But, for orbifolds, to our knowledge, there does not exist a single convenient program which computes Chern-Simons invariant.
The Chern-Simons invariant can also be obtained by eta-invariant, $\eta(M)$. $\text{\textnormal{cs}}(M)=\frac{3}{2} \eta(M)$ (mod  $\frac{1}{2}$)~\cite{CGHN1,Y1}. But it is easier to compute Chern-Simons invariants than $\eta$-invariants. 

Instead of working on complicated combinatorics of 3-dimensional ideal tetrahedra to find the Chern-Simons invariants of the twist knot orbifolds, we deal with simple one dimensional singular loci. To make the computation simpler, we express the complex length of the singular locus in terms of the complex distance between the two axes fixed by two generators. 
To find out the complex length of the singular locus, we start from working on $\text{\textnormal{SL}}(2,C)$. The singular locus appears in $\text{\textnormal{SL}}(2,C)$ as a series of matrix multiplications. If we first calculate the complex distance and recover the complex length of the singular locus from the complex distance using Lemma~\ref{lem:pytha}, the multiplication needed in $\text{\textnormal{SL}}(2,C)$ can be cut down approximately by half.
Similar methods for volumes can be found in~\cite{HMP}. We use the Schl\"{a}fli formula for the generalized Chern-Simons function on the family of a twist knot cone-manifold structures~\cite{HLM3}.  In~\cite{HLM2} a method of calculating the Chern-Simons invariants of two-bridge knot orbifolds were introduced but without explicit formulae. 
Similar approaches for $SU(2)$-connections can be found in~\cite{KK1} and for $\text{\textnormal{SL}}(2,C)$-connections in~\cite{KK2}.
Explicit integral formulae for Chern-Simons invariants of the Whitehead link (the two component twist link) orbifolds and their cyclic coverings are presented in~\cite{A,A1}.
A brief explanation for twist knot cone-manifolds are in~\cite{HMP}. You can also refer to~\cite{CHK,T1,K1,P2,HLM1,PW}.

\medskip

The main purpose of the paper is to find the explicit and efficient formulae for  Chern-Simons invariants of the twist knot orbifolds.  For two-bridge hyperbolic link, there exists an angle $\alpha_0 \in [\frac{2\pi}{3},\pi)$ for each link $K$ such that the cone-manifold $K(\alpha)$ is hyperbolic for $\alpha \in (0, \alpha_0)$, Euclidean for $\alpha=\alpha_0$, and spherical for $\alpha \in (\alpha_0, \pi]$ \cite{P2,HLM1,K1,PW}. We will use the Chern-Simons invariant of the lens space $L(4n+1,2n+1)$ calculated in~\cite{HLM2}.
The following theorem gives the formulae  for $T_m$ for even integers $m$. For odd integers $m$, we can replace $T_m$ by $T_{-m-1}$ as explained in Section~\ref{sec:twist}. So, the following theorem actually covers all possible hyperbolic twist knots. We exclude the non-hyperbolic case, $n=0,\ -1$.

\begin{Thm}\label{thm:main}
Let $T_{2n}$ be a hyperbolic twist knot. Let $T_{2n}(\alpha)$, $0 \leq \alpha < \alpha_0$ be a hyperbolic cone-manifold  with underlying space $S^3$ and with singular set $T_{2n}$ of cone-angle $\alpha$.  Let $k$ be a positive integer such that $k$-fold cyclic covering of $T_{2n}(\frac{2 \pi}{k})$ is hyperbolic. Then the Chern-Simons invariant of $T_{2n}(\frac{2 \pi}{k})$ (mod $\frac{1}{k}$ if $k$ is even or mod $\frac{1}{2k}$ if $k$ is odd) is given by the following formula:

\begin{equation*}
\begin{split}
\text{\textnormal{cs}} \left(T_{2n} \left(\frac{2 \pi}{k} \right)\right) & \equiv \frac{1}{2} \text{\textnormal{cs}}\left(L(4n+1,2n+1) \right) \\
&+\frac{1}{4 \pi^2}\int_{\frac{2 \pi}{k}}^{\alpha_0} Im \left(2*\log \left(M^{-2}\frac{A+iV}{A-iV}\right)\right) \: d\alpha \\
& +\frac{1}{4 \pi^2}\int_{\alpha_0}^{\pi}
 Im \left(\log \left(M^{-2}\frac{A+iV_1}{A-iV_1}\right)+\log \left(M^{-2}\frac{A+iV_2}{A-iV_2}\right)\right) \: d\alpha,
\end{split}
\end{equation*}

\noindent where 
for $A=\cot{\frac{\alpha}{2}}$, $V$ $(Im(V) \leq 0)$, $V_1$, and $V_2$ are zeroes of the complex distance polynomial $P_{2n}=P_{2n}(V,B)$ which is either given recursively by 

\medskip
\begin{equation*}
P_{2n} = \begin{cases}
 \left(\left(4 B^4-8 B^2+4\right) V^2-4 B^4+8 B^2-2\right) P_{2(n-1)} -P_{2(n-2)}, \ 
\text{if $n>1,$} \\
 \left(\left(4 B^4-8 B^2+4\right) V^2-4 B^4+8 B^2-2\right) P_{2(n+1)}-P_{2(n+2)},  \
\text{if $n<-1,$}
\end{cases}
\end{equation*}
\medskip

\noindent with initial conditions
\begin{equation*}
\begin{split}
P_{-2} (V,B) & =\left(2 B^2-2\right) V+2 B^2-1,\\
P_{0} (V,B) & = 1, \\
P_{2} (V,B) & =\left(4 B^4-8 B^2+4\right) V^2+\left(2-2 B^2\right) V-4 B^4+6 B^2-1 \\
\end{split}
\end{equation*}

\medskip

\noindent or given explicitly by 
\begin{equation*}
P_{2n} = \begin{cases}
 \sum_{i=0}^{2n} 
\left(\begin{smallmatrix}
n+ \lfloor \frac{i}{2} \rfloor \\
i
\end{smallmatrix}\right)
 \left(2 (B^2-1) (1-V)\right)^{i} 
\left(1+2 (V-1)^{-1}\right)^{\lfloor \frac{1+i}{2}  \rfloor}
\qquad
\text{if $n \geq 0$} \\
 \sum_{i=0}^{-2n-1} 
\left(\begin{smallmatrix}
-n+ \lfloor \frac{(i-1)}{2} \rfloor \\
i
\end{smallmatrix}\right)
 \left(2 (B^2-1) (V-1) \right)^i 
 \left(1+ 2 (V-1)^{-1}\right)^{\lfloor \frac{1+i}{2}  \rfloor}
\text{if $n<0$}
\end{cases}
\end{equation*}

\noindent where $B=\cos{\frac{\alpha}{2}}$ and $V_1$ and $V_2$ approach common $V$ as $\alpha$ decreases to $\alpha_0$ and they come from the components of $V$ and $\overline{V}$.
\end{Thm}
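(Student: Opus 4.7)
The plan is to apply the Schl\"afli-type formula for the generalized Chern-Simons function on the one-parameter family $\{T_{2n}(\alpha)\}$, as developed in~\cite{HLM3}, and integrate from $\alpha = 2\pi/k$ up to the endpoint $\alpha = \pi$, where the cone-manifold degenerates to a $\mathbb{Z}/2$-orbifold whose double cover is the lens space $L(4n+1, 2n+1)$ whose Chern-Simons invariant is already known from~\cite{HLM2}. The Euclidean angle $\alpha_0$ partitions the parameter interval into a hyperbolic piece $[2\pi/k, \alpha_0]$ and a spherical piece $[\alpha_0, \pi]$, which is precisely why the stated formula splits as the lens-space boundary term plus two separate integrals.

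To make the integrands explicit, I would first parametrize the discrete faithful $\text{SL}(2,\mathbb{C})$-representation of $\pi_1(T_{2n}(\alpha))$ by the complex distance $V$ between the two axes fixed by a pair of meridional generators, together with $B = \cos(\alpha/2)$, following~\cite{HMP}. The twist-knot relation then reduces to a single polynomial equation $P_{2n}(V,B) = 0$. Because adding a pair of twists corresponds to left-multiplication by a fixed $2\times 2$ transfer factor, trace identities force a Chebyshev-like three-term recursion in $n$, which is exactly the recursion displayed in the statement; the negative-$n$ case is symmetric under the involution sending $T_{2n}$ to $T_{-2n-1}$, and the two closed-form expressions follow by a routine induction from the initial data $P_{-2}, P_0, P_2$.

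Next, I would use the Pythagorean Lemma~\ref{lem:pytha} to convert $V$ into the complex length of the longitude, bypassing a second long chain of matrix multiplications; this is the computational shortcut advertised in the introduction. The outcome is that the eigenvalue of the longitude holonomy equals $M^{-2}(A+iV)/(A-iV)$ with $A = \cot(\alpha/2)$, so that its imaginary logarithm is exactly the integrand appearing in the theorem. In the hyperbolic regime one relevant root $V$ with $\text{Im}(V)\leq 0$ determines the geometric representation, whereas past $\alpha_0$ this root and its conjugate bifurcate into two real roots $V_1, V_2$, each contributing its own longitude holonomy; summing these produces the sum of two logarithms in the spherical integral.

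Finally, the boundary value at $\alpha = \pi$ identifies $T_{2n}(\pi)$ with the $\mathbb{Z}/2$-orbifold whose double cover is $L(4n+1,2n+1)$, which accounts for the factor $\tfrac{1}{2}$; the indeterminacy $\tfrac{1}{k}$ (or $\tfrac{1}{2k}$ for odd $k$) reflects the lift of the Chern-Simons number along the $k$-fold cyclic cover of the orbifold. The main obstacle, in my view, is the bookkeeping across the Euclidean transition: one must check that the analytic branches $V(\alpha)$ and $\overline{V(\alpha)}$ in the hyperbolic regime glue continuously onto the correct real pair $V_1(\alpha), V_2(\alpha)$ in the spherical regime, and that the Schl\"afli formula is applied with compatible sign conventions on both sides of $\alpha_0$. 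Everything else is a matter of Chebyshev-style recursion, induction on $n$, and careful accounting of matrix products.
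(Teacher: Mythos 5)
Your proposal matches the paper's own argument essentially step for step: the paper likewise integrates the Schl\"afli formula (Theorem~\ref{thm:schlafli}) for the generalized Chern-Simons function from $\tfrac{2\pi}{k}$ to $\pi$, anchors the endpoint via $I\left(T_{2n}(\pi)\right) \equiv \tfrac{1}{2}\,\mathrm{cs}\left(L(4n+1,2n+1)\right)$ $\left(\text{mod } \tfrac{1}{2}\right)$ with the lens-space value taken from Theorem~\ref{thm:Lens}, splits the integral at $\alpha_0$ according to the hyperbolic/spherical transition, and obtains the integrand from the longitude eigenvalue $M^{-2}\frac{A+iV}{A-iV}$ of Lemma~\ref{lem:pytha}, with $P_{2n}$ taken from~\cite{HMP} and its closed form derived by solving the recursion exactly as you describe. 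Your flagged concern about matching the branches $V,\overline{V}$ to $V_1,V_2$ across $\alpha_0$ is precisely the point the paper handles (tersely) by appealing to the geometric interpretation of the hyperbolic and spherical holonomy representations, so there is no substantive divergence.
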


We here present some derived results. Theorem~\ref{thm:A-polynomial-re} gives the recursive formulae of A-polynomial of twist knots. In~\cite[Theorem 1]{HS}, Hoste and Shanahan presented the recursive formulae of A-polynomials of the twist knots with the opposite orientation.
 Theorem~\ref{thm:A-polynomial} gives the explicit formulae of A-polynomials of twist knots.  In~\cite[Theorem1.1]{Mat}, Mathews presented the explicit formulae of A-polynomials of the twist knots with the opposite orientation. In case $n \leq 0$ of ~\cite{Mat}, there is a typo; $\left(\frac{M^2-1}{L+M^2}\right)^i$ has to be changed into  $\left(\frac{1-M^2}{L+M^2}\right)^i$~\cite{Mat1}.
For each side of slop $a/b$ of the Newton polygon of $A_{2n}$, we can obtain the edge polynomial. By substituting t for $L^b M^a$ of each term appearing along the side edge, we have the edge polynomial, $f_{a/b}(t)$. 
Theorem~\ref{thm:edge-polynomial} gives the edge polynomials of twist knots. In~\cite[Corollary 11.5]{CL}, Cooper and Long showed that the edge polynomials of a two-bridge knot are all $\pm (t-1)^k  (t+1)^l$. We pin them down in case of twist knots. Corollary~\ref{cor:boundary} tells the number of boundary components in case of slope $-4n+2$ of twist knots. From~\cite{HT}, we know that the number of boundary components of  two-bridge knots are one or two. We pin them down in case of slope $-4n+2$ of twist knots.
Proofs of derived results are in Section~\ref{sec:poly}.

\begin{Thm}  \label{thm:A-polynomial-re}
A-polynomial $A_{2n}=A_{2n}(L,M)$ is given recursively by

\medskip
\begin{equation*}
A_{2n} = \begin{cases}
A_u A_{2(n-1)} 
 -M^4 \left(1+L M^2\right)^4 A_{2(n-2)} \ \ \ 
\text{if $n>1$} \\
 A_u A_{2(n+1)}
-M^4 \left(1+L M^2\right)^4 A_{2(n+2)}  \ \ \
\text{if $n<-2$}
\end{cases}
\end{equation*}
\medskip

\noindent with initial conditions
\begin{equation*}
\begin{split}
A_{-4} (L,M) & =1-L+2 L M^2+2 L M^4+L^2 M^4-L^2 M^6-L M^8+L M^{10}+2 L^2 M^{10}\\
&+2 L^2 M^{12}-L^2 M^{14}+L^3 M^{14},\\
A_{-2} (L,M) & =1+L M^6,\\
A_{0} (L,M) & = -1, \\
A_{2} (L,M) & =L-L M^2-M^4-2 L M^4-L^2 M^4-L M^6+L M^8, \\
\end{split}
\end{equation*}
where
\begin{equation*}
A_u=1-L+2 L M^2+M^4+2 L M^4+L^2 M^4+2 L M^6-L M^8+L^2 M^8.
\end{equation*}
\end{Thm}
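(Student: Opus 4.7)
The plan is to lift the recursion for the complex distance polynomial $P_{2n}(V,B)$ to a recursion for the A-polynomial through an explicit change of variables, and then to check initial conditions directly.

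First I would record the substitution that connects $P_{2n}$ to $A_{2n}$. With $M = e^{i\alpha/2}$ one has $B = \cos(\alpha/2) = (M + M^{-1})/2$, so $B^2 - 1$ and $B^4 - 2B^2 + 1 = (B^2-1)^2$ can be rewritten as rational functions in $M^2$. The variable $V$ encodes (a scalar multiple of) the hyperbolic cosine of the complex distance between the two axes fixed by the standard generators of the twist knot group. Using Lemma~\ref{lem:pytha} (the Pythagorean relation) to express this complex distance in terms of the complex length of the longitude, one extracts an explicit rational formula $V = V(L, M)$. This substitution, together with $B = (M+M^{-1})/2$, is the bridge between $P_{2n}$ and $A_{2n}$.

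Next I would substitute into the recursion
\begin{equation*}
P_{2n} = \bigl((4B^4 - 8B^2 + 4)V^2 - 4B^4 + 8B^2 - 2\bigr)\,P_{2(n-1)} - P_{2(n-2)},
\end{equation*}
and clear denominators. The denominators that appear are controlled powers of $M$ (from $B = (M+M^{-1})/2$) and of $1 + LM^2$ (from the rational expression $V(L,M)$). After multiplying both sides by the appropriate monomial in $M$ and $(1+LM^2)$, the coefficient $(4B^4-8B^2+4)V^2 - 4B^4 + 8B^2 - 2$ becomes exactly $A_u$, and the degree-balancing factor that attaches to $P_{2(n-2)}$ becomes exactly $M^4(1 + LM^2)^4$. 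This yields the claimed recursion for $n > 1$; the branch $n < -2$ follows from the analogous identity for the reverse recursion of $P_{2n}$.

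Finally I would verify the initial conditions $A_{-4}, A_{-2}, A_0, A_2$ by substituting the change of variables directly into $P_{-4}, P_{-2}, P_0, P_2$, clearing denominators, and comparing with the polynomials stated in the theorem. The main obstacle will be the bookkeeping in the second step: writing $V(L,M)$ cleanly, tracking how the two families of denominators $M^a$ and $(1+LM^2)^b$ accumulate through the recursion, and verifying that they combine to exactly the factor $M^4(1+LM^2)^4$ prescribed in the recursion. This is where all the algebraic content of the theorem is concentrated; the remainder is routine polynomial manipulation.
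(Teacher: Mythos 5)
Your proposal is correct and takes essentially the same route as the paper, whose proof likewise obtains the recursion by substituting $V=\left((M^2+1)(LM^2-1)\right)/\left((M^2-1)(LM^2+1)\right)$ from Lemma~\ref{lem:pytha} and $B=(M+M^{-1})/2$ into the recursive formula of Theorem~\ref{thm:cpolynomial} and clearing denominators. Your denominator bookkeeping---a factor $M^2(1+LM^2)^2$ cleared per recursion step, turning the coefficient into $A_u$ and the factor attached to the two-step term into $M^4(1+LM^2)^4$---is exactly the computation the paper leaves implicit.
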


\medskip

\begin{Thm}  \label{thm:A-polynomial}
A-polynomial $A_{2n}=A_{2n}(L,M)$ is given explicitly by 

\medskip
\begin{equation*}
A_{2n} = \begin{cases}
- M^{2 n} \left(1+L M^2\right)^{2 n} \sum_{i=0}^{2n} 
\left(\begin{smallmatrix}
n+ \lfloor \frac{i}{2} \rfloor \\
i
\end{smallmatrix}\right)
 \left(1-M^2\right)^i \left(1+L M^2\right)^{-i} \\
\times (L-1)^{\lfloor \frac{i}{2} \rfloor}\left(L M^2-M^{-2}\right)^{\lfloor \frac{1+i}{2}  \rfloor}
\qquad
\text{if $n \geq 0$} \\
 M^{-2 n} \left(1+L M^2\right)^{-2 n-1} \sum_{i=0}^{-2n-1} 
\left(\begin{smallmatrix}
-n+ \lfloor \frac{(i-1)}{2} \rfloor \\
i
\end{smallmatrix}\right)
 \left(M^2-1\right)^i \left(1+L M^2\right)^{-i} \\
\times (L-1)^{\lfloor \frac{i}{2} \rfloor}\left(L M^2-M^{-2}\right)^{\lfloor \frac{1+i}{2}  \rfloor}  \qquad
\text{if $n<0$}
\end{cases}
\end{equation*}
\end{Thm}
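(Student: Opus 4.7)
The plan is to derive the explicit formula for $A_{2n}(L,M)$ from the explicit formula for the complex distance polynomial $P_{2n}(V,B)$ of Theorem~\ref{thm:main} by means of the algebraic substitution that relates $(V,B)$ to the longitude--meridian eigenvalues $(L,M)$. On the geometric branch of the character variety we have $M^2 = e^{i\alpha}$, so $B = \cos(\alpha/2) = (M + M^{-1})/2$ and $2(B^2 - 1) = (M - M^{-1})^2/2$. Combined with the Pythagorean-type identity of Lemma~\ref{lem:pytha}, which expresses the complex length of the longitude (and hence $\log L$) in terms of the complex distance $V$ between the two axes fixed by the generators, this produces rational expressions $1 - V = \varphi_{1}(L,M)$ and $(V+1)/(V-1) = \varphi_{2}(L,M)$ whose denominators are monomials in $M$ times $1 + LM^{2}$ and $L-1$, respectively.

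With these substitutions in hand, I would insert them term by term into the $n \geq 0$ branch of the explicit $P_{2n}$ formula. The factor $\bigl(2(B^2-1)(1-V)\bigr)^{i}$ becomes a monomial in $M$ times $(1-M^{2})^{i}(1+LM^{2})^{-i}$, while the factor $\bigl(1 + 2(V-1)^{-1}\bigr)^{\lfloor (1+i)/2 \rfloor} = \bigl((V+1)/(V-1)\bigr)^{\lfloor (1+i)/2 \rfloor}$ becomes a monomial in $M$ times $(LM^{2} - M^{-2})^{\lfloor (1+i)/2 \rfloor}/(L-1)^{\lfloor (1+i)/2 \rfloor}$. Using the parity identity $\lfloor (1+i)/2 \rfloor = \lfloor i/2 \rfloor + (i \bmod 2)$ and regrouping the remaining $L-1$ and $LM^{2} - M^{-2}$ factors, the whole sum becomes a rational function in $L, M$ whose denominator is exactly $M^{2n}(1+LM^{2})^{2n}$. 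Clearing this denominator and fixing the overall sign (by matching $A_{0} = -1$ and $A_{2}$ from the initial conditions of Theorem~\ref{thm:A-polynomial-re}) yields the claimed formula for $n \geq 0$.

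The case $n < 0$ is handled by the same substitution applied to the second branch of the explicit $P_{2n}$ formula, where the overall powers of $V-1$ and $1+LM^{2}$ change so that the clearing factor becomes $M^{-2n}(1+LM^{2})^{-2n-1}$. The main obstacle is the parity-dependent bookkeeping: the factor $(L-1)^{\lfloor i/2 \rfloor}$ appearing in the numerator of the stated $A_{2n}$ formula does not arise directly from any single factor of $P_{2n}$, but only after recombining $\bigl((V+1)/(V-1)\bigr)^{\lfloor (1+i)/2 \rfloor}$ with $(L-1)$-contributions extracted from $\varphi_{1}$ and the monomial prefactors. One must verify that the two natural binomial rewrites are compatible and that no spurious $(L-1)$ denominators survive. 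As a cross-check I would confirm the explicit formula against the initial conditions $A_{0}, A_{\pm 2}, A_{-4}$ and against the recursion of Theorem~\ref{thm:A-polynomial-re} (which can itself be re-derived by applying the same substitution to the recursion for $P_{2n}$), so that any sign or unit-factor discrepancy is caught and the induction closes the remaining cases.
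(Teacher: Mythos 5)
Your proposal follows essentially the same route as the paper: the paper's proof of Theorem~\ref{thm:A-polynomial} consists precisely of substituting $V=\bigl((M^2+1)(LM^2-1)\bigr)/\bigl((M^2-1)(LM^2+1)\bigr)$ (Lemma~\ref{lem:pytha}) and $B=(M+M^{-1})/2$ into the explicit complex distance polynomial of Theorem~\ref{thm:cpolynomial-ex} and clearing denominators, which is exactly your plan. Your substitution computations are correct --- indeed $2(B^2-1)(1-V)=(1-M^2)(L-1)/(1+LM^2)$ and $1+2(V-1)^{-1}=(LM^2-M^{-2})/(L-1)$, with the parity identity $i-\lfloor(1+i)/2\rfloor=\lfloor i/2\rfloor$ producing the $(L-1)^{\lfloor i/2\rfloor}$ factor --- and your cross-checks against the initial conditions and recursion only add detail the paper leaves implicit.
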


\medskip 

\begin{Thm}  \label{thm:edge-polynomial}
When $n > 0$, the edge polynomials of twist knots are
\begin{align*}
\begin{cases}
&\pm (t-1) \ \ \ \text{if the slope is $-4n$} \\
&-(t-1)^n  \ \ \ \text{if the slope is $4$ and n is even} \\
& \pm (t-1)^n    \ \ \ \text{if the slope is $4$ and n is odd}.
\end{cases}
\end{align*}

When $n < 0$, the edge polynomials of twist knots are
\begin{align*}
\begin{cases}
&t+1 \ \ \ \ \ \ \ \ \ \ \text{if the slope is $-4n+2$} \\
&\pm (t-1)^{-n-1}  \ \ \ \text{if the slope is $4$ and n is even} \\
& (t-1)^{-n-1}    \ \ \ \text{if the slope is $4$ and n is odd}.
\end{cases}
\end{align*}
\end{Thm}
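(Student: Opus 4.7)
\medskip

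\noindent\textbf{Proof plan.} The strategy is to read off the edge polynomials directly from the explicit sum for $A_{2n}(L,M)$ given in Theorem~\ref{thm:A-polynomial}. Each term of that sum is a product of the factors $M^{2n}$, $(1+LM^2)^{2n-i}$, $(1-M^2)^{i}$, $(L-1)^{\lfloor i/2 \rfloor}$, and $(LM^2-M^{-2})^{\lfloor (i+1)/2 \rfloor}$, multiplied by a binomial coefficient and a sign. Each of these five basic factors is a binomial whose two monomials have known $(L,M)$-bidegrees, namely $(0,0)$--$(1,2)$, $(0,0)$--$(0,2)$, $(1,0)$--$(0,0)$, and $(1,2)$--$(0,-2)$ respectively. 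I plan first to plot these bidegrees, determine the vertices of the Newton polygon of $A_{2n}$, and verify that the edges of claimed slope $-4n$, $4$, $-4n+2$ (and their counterparts under the natural reflection symmetry of the $A$-polygon) really do occur.

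Once the Newton polygon is laid out, the edge polynomial $f_{a/b}(t)$ along an edge of slope $a/b$ is obtained by keeping, inside each factor and for each index $i$, only the monomial whose bidegree lies on that edge, and then substituting $t=L^{b}M^{a}$. For the slope-$4$ edge I take $t=LM^{4}$, so that $LM^{2}-M^{-2}=M^{-2}(t-1)$ and the factor $(L-1)$ contributes its $(1,0)$-corner $L$; I expect only one monomial from each of the remaining factors $1+LM^{2}$ and $1-M^{2}$ to land on the edge, whichever it is determined by the direction of extremality. Collecting the surviving contributions the sum collapses, via the Vandermonde-type identity $\sum_{i}\binom{n+\lfloor i/2\rfloor}{i}(-1)^{i}=\pm 1$ (after handling $i$ even and $i$ odd separately), leaving the factor $(t-1)^{n}$ with the sign dictated by parity of $n$. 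For the slope-$(-4n)$ edge the analysis is simpler: almost every index $i>0$ contributes a monomial strictly interior to the polygon, so only $i=0$ survives and yields the linear factor $\pm(t-1)$. The $n<0$ cases are treated symmetrically from the second branch of Theorem~\ref{thm:A-polynomial}; in particular only one index contributes to the slope-$(-4n+2)$ edge, producing the factor $t+1$ with a $+$ sign coming from the opposite choice of monomial in $(1+LM^{2})$.

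The main obstacle is the bookkeeping caused by the floor functions $\lfloor i/2\rfloor$ and $\lfloor (i+1)/2\rfloor$, which shift the contributions of even- and odd-indexed terms differently. Rather than treating the sum as one expression, I will split it into $i=2j$ and $i=2j+1$ subsums, rewrite each as a polynomial in the edge variable $t=L^bM^a$, and then combine. A useful consistency check, and an alternative route if the direct computation becomes unwieldy, is to induct on $n$ using the three-term recurrence of Theorem~\ref{thm:A-polynomial-re}: substituting $t=L^bM^a$ into $A_{2n}=A_u A_{2(n-1)} - M^{4}(1+LM^{2})^{4}A_{2(n-2)}$ and restricting to each fixed edge gives a linear recurrence for $f_{a/b}(t)$ whose characteristic roots can be verified to yield $(t-1)^{n}$, $\pm(t-1)$, and $t+1$ respectively, given the base cases $A_{-4},A_{-2},A_{0},A_{2}$ checked by hand.
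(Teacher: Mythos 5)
Your plan contains the right raw material (the explicit sum of Theorem~\ref{thm:A-polynomial} plus the recurrence of Theorem~\ref{thm:A-polynomial-re}), but it glosses over the one point that actually requires work: termwise bidegree analysis does \emph{not} determine the Newton polygon, because the extreme monomials of consecutive indices $i$ cancel. Concretely, for $n>0$ the termwise-maximal candidate $L^{2n}M^{8n}$ receives coefficient $-1$ from $i=2n$ and $+1$ from $i=2n-1$ and vanishes (check $A_2$: it has no $L^2M^8$ term, and no constant term either, although the termwise minimum sits at bidegree $(0,0)$). This is why the paper proves the polygon separately (Lemma~\ref{lem:edge-polynomial}), by induction on the recurrence taken modulo $2$, using Cooper--Long's ones-in-the-corners theorem~\cite{CL2} so that the corners survive the reduction; your proposal has no substitute for this step. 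The same cancellation phenomenon invalidates your claim that on the slope-$(-4n)$ edge ``only $i=0$ survives'': the $i=0$ term $-M^{2n}(1+LM^2)^{2n}$ is supported on a slope-$2$ segment, and the two monomials actually on that edge, $\pm L$ and $\mp M^{4n}$, arise from combining the contributions of $i=2n-1$ and $i=2n$, exactly as in the paper's proof; similarly the slope-$(-4n+2)$ edge for $n<0$ involves two indices, not one.

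For the slope-$4$ edges your mechanism is half right but the stated identity is false: $\sum_{i=0}^{2n}\binom{n+\lfloor i/2\rfloor}{i}(-1)^i$ equals $2$ already at $n=2$, not $\pm1$. In fact for $n>0$ no identity is needed once the polygon is known: maximizing the functional $\phi(j,k)=4j-k$ over the $i$-th term gives $2n+2\lfloor i/2\rfloor$, so only $i=2n$ reaches the edge, and the factor $(LM^2-M^{-2})^n=M^{-2n}(t-1)^n$, being parallel to the edge, yields $\pm(t-1)^n$ at once. For $n<0$, however, two indices ($i=-2n-2$ and $i=-2n-1$) land on the slope-$4$ edge, and a genuine summation of two signed binomial series is required. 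The paper avoids that computation entirely: it reads off only the constant term and the $t$-coefficient ($1$ and $n+1$) and then invokes Cooper--Long's classification that edge polynomials of two-bridge knots are $\pm(t-1)^k(t+1)^l$~\cite{CL}, which, combined with the lattice length of the edge, forces the power of $t+1$ to be zero. Your proposal never uses this classification, so you must either carry out the full summation or import that input. Finally, your recurrence fallback can only work for the slope-$4$ edges: the slopes $-4n$ and $-4n+2$ vary with $n$, so there is no fixed edge along which Theorem~\ref{thm:A-polynomial-re} restricts to a linear recurrence for $f_{a/b}(t)$, and even on the slope-$4$ edge the restriction presupposes the polygon alignment you have not yet established.
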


\begin{Cor} \label{cor:boundary}
 The number of boundary components of every incompressible surface corresponding to slope $-4n+2$ of twist knots are $2$.
\end{Cor}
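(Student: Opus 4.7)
The plan is to combine Theorem~\ref{thm:edge-polynomial} with a standard fact from the Culler--Shalen theory of essential surfaces detected by the A-polynomial, specifically the correspondence, due to Cooper--Long~\cite{CL} (refining CCGLS), between the $t$-degree of an edge polynomial $f_{p/q}(t)$ and the number of boundary components of the associated incompressible surface. Theorem~\ref{thm:edge-polynomial} supplies $f_{-4n+2}(t) = t+1$ (of $t$-degree $1$) in the case $n<0$, so once the degree-to-boundary-components formula is in hand the corollary is immediate.

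Step one is to recall the precise correspondence. For an edge of slope $p/q$ (in lowest terms) of the Newton polygon of $A_{K}$, the edge polynomial $f_{p/q}(t)$ in $t=L^{q}M^{p}$ encodes the ideal points of the character variety of the knot exterior along that side, and by Culler--Shalen each such ideal point produces an essential surface in the exterior with boundary slope $p/q$. The number of boundary components of that surface is extracted from $f_{p/q}$: it equals twice the $t$-degree of the edge polynomial, the factor of $2$ arising because $M$ and $L$ are eigenvalues of the lifted commuting meridian and longitude in $\mathrm{SL}(2,\mathbb{C})$ rather than the characters themselves, so a unit step along the edge in $(M,L)$-coordinates corresponds to two meridional intersections.

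Step two is then to plug in. With $p/q = (-4n+2)/1$ and $f_{-4n+2}(t) = t+1$ of $t$-degree $1$, the formula gives $2\cdot 1 = 2$ boundary components. Since $t+1$ has a single simple root at $t=-1$, there is essentially one surface class corresponding to this edge, so \emph{every} incompressible surface with boundary slope $-4n+2$ has exactly $2$ boundary components, as claimed. Consistency with the two-bridge input of~\cite{HT} (every boundary slope of a two-bridge knot supports surfaces with one or two boundary components) is a useful sanity check: the factor $(t+1)$ picks out the count $2$, and the alternative factor $(t-1)$ appearing for the slope-$4$ edges in Theorem~\ref{thm:edge-polynomial} is the one responsible for other possibilities.

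The main obstacle is bibliographic rather than computational: pinning down the exact statement of the degree-to-boundary-components formula in the literature, since the factor of $2$ is stated differently across references due to eigenvalue-versus-character and $\mathrm{SL}_2$-versus-$\mathrm{PSL}_2$ conventions, and verifying that the twist-knot side of slope $-4n+2$ indeed fits the hypotheses of the cited result (in particular that $t+1$ is not a ``spurious'' factor arising from reducible characters). Once these points are settled, the derivation from Theorem~\ref{thm:edge-polynomial} occupies a single line.
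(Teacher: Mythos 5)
There is a genuine gap: the ``degree-to-boundary-components formula'' on which your whole argument rests does not exist in the literature, and it is in fact false. What \cite{CCGLS1} proves (and what the paper uses) is that each root of the edge polynomial $f_{a/b}$ is a root of unity whose \emph{order} divides the number of boundary components of every incompressible surface corresponding to slope $a/b$; the $t$-degree of $f_{a/b}$ plays no role. Your formula is refuted by the paper's own Theorem~\ref{thm:edge-polynomial}: for the slope-$4$ edges the edge polynomial is $\pm(t-1)^n$ (resp.\ $\pm(t-1)^{-n-1}$), of degree $|n|$ or $|n|-1$, yet by Hatcher--Thurston \cite{HT} every incompressible surface in a two-bridge knot exterior has at most two boundary components --- so ``$2\times\deg_t f$'' boundary components is impossible for $|n|>1$. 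Note that the correct statement is consistent here precisely because $t=1$ has order $1$ and so imposes no constraint, whereas your degree count would; the relevant invariant is the order of the root of unity, not the multiplicity or the degree.

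The paper's proof is a two-sided squeeze that your proposal inverts. The lower bound comes from \cite{CCGLS1}: since $f_{-4n+2}(t)=t+1$ has the single root $t=-1$, a root of unity of order $2$, the number of boundary components of \emph{every} incompressible surface with slope $-4n+2$ is divisible by $2$, hence at least $2$ --- and this applies to every such surface directly, so your shaky step ``one simple root, hence essentially one surface class, hence every surface'' is unnecessary. The upper bound is Hatcher--Thurston: two-bridge knots support only surfaces with one or two boundary components. You relegated \cite{HT} to a ``sanity check,'' but it is half the proof; without it your argument gives no upper bound at all, and with your (false) degree formula it would actually be contradicted by the other edges. Replacing your Step one with the order-divisibility statement of \cite{CCGLS1} and promoting \cite{HT} from sanity check to the upper-bound half yields the paper's argument.
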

%%%%%%%%%%%%%%%%%%%%%%%%%%%%%%%%%%%%%%%%%%%%%%%%%%%%%%%%
%%%%%%%%%%%%%%%%%%%%%%%%%%%%%%%%%%%%%%%%%%%%%%%%%%%%%%%%

\section{Twist knots} \label{sec:twist}

\begin{figure} 
\resizebox{5cm}{!}{\includegraphics{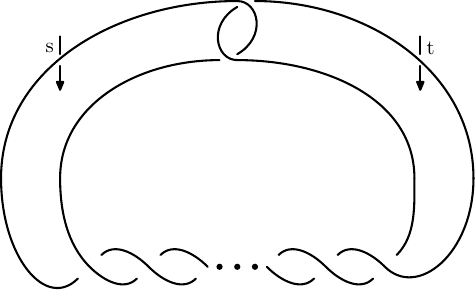}}
reflection
\reflectbox{\resizebox{5cm}{!}{\includegraphics{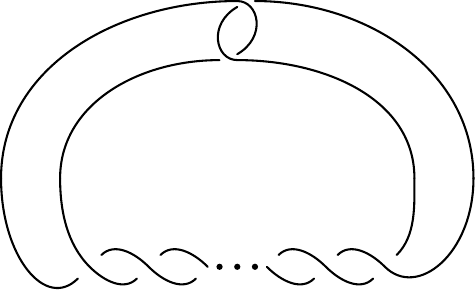}}}
\caption{twist knot (left) and its mirror image (right)} \label{fig:T2n}
\end{figure}

\begin{figure} 
\resizebox{4.5cm}{!}{\includegraphics{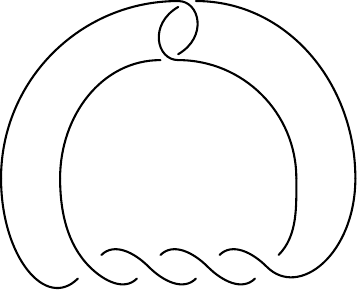}}
\ \ \
\resizebox{6cm}{!}{\includegraphics[angle=90]{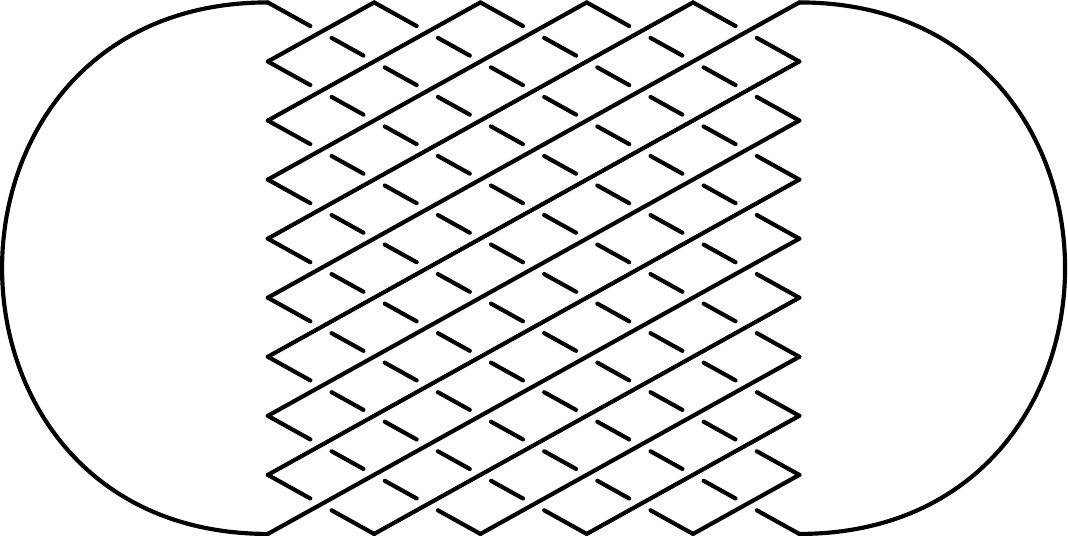}}
\caption{Knot $6_1$ with slope $2/9$ (left) and with slope $5/9$ (right).} \label{fig:knot}
\end{figure}

A knot $K$ is the twist knot if $K$ has a regular two-dimensional projection of the form in Figure~\ref{fig:T2n}. $K$ has 2 left-handed vertical crossings and $m$ right-handed horizontal crossings. We will denote it by $T_m$. One can easily check that the slope of $T_m$ is $2/(2m+1)$ which is equivalent to the knot with slope $(m+1)/(2m+1)$~\cite{S1}. For example, Figure~\ref{fig:knot}  shows two different regular projections of knot $6_1$; one with slope $2/9$ (left) and the other with slope $5/9$ (right).
Note that $T_m$ and its mirror image have the same fundamental group up to orientation and hence have the same fundamental domain up to isometry in $\mathbb{H}^3$. It follows that $T_m(\alpha)$ and its mirror image have the same fundamental set upto isometry in $\mathbb{H}^3$ and have the same Chern-Simons invariant up to sign. Since the mirror image of $T_m$ is equivalent to $T_{-m-1}$, when $m$ is odd we will use $T_{-m-1}$ for $T_m$. Hence a twist knot can be represented by $T_{2n}$ for some integer $n$ with slope $2/(4n+1)$ or $(2n+1)/(4n+1)$. 

Let us denote by $X_{2n}$ the exterior of $T_{2n}$. In~\cite{HS}, the fundamental group of $X_{2n}$ is calculated with 2 right-handed vertical crossings as positive crossings instead of two left-handed vertical crossings. The following theorem is tailored to our purpose. The following theorem can also be obtained by reading off the fundamental group from the Schubert normal form of $T_{2n}$ with slope $(2n+1)/(4n+1)$~\cite{R1}.

\begin{Prop}\label{prop:fundamentalGroup}
$$\pi_1(X_{2n})=\left\langle s,t \ |\ swt^{-1}w^{-1}=1\right\rangle,$$
where $w=(ts^{-1}t^{-1}s)^n$. 
\end{Prop}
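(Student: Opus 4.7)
The plan is to read off the presentation directly from the Wirtinger relations of the diagram in Figure~\ref{fig:T2n}, taking $s$ and $t$ to be meridians of the two overstrands at the bridges. Being a two-bridge knot, $X_{2n}$ deformation retracts onto a 2-complex with two 1-cells and a single 2-cell, so one expects exactly one Wirtinger relation to survive after elimination; the task is to identify the surviving word $w$.

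First I would traverse the $2n$ horizontal crossings one at a time, writing each new overstrand as a conjugate of $s$ or $t$ by an accumulated word $w_k$. At a pair of consecutive right-handed horizontal crossings, the two Wirtinger relations contribute a factor of $ts^{-1}t^{-1}s$, the exact sign pattern being dictated by the left-handed vertical / right-handed horizontal convention of Figure~\ref{fig:T2n}. An induction on $|n|$ then gives $w_{2n} = w = (ts^{-1}t^{-1}s)^n$; the $n<0$ case comes out automatically by inverting the commutator block. Next, the two vertical crossings impose one further relation (the second being redundant, as usual for two-bridge presentations), and after substituting the expression for the accumulated meridian it collapses to $sw = wt$, i.e.\ $swt^{-1}w^{-1} = 1$.

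As an independent check one can invoke the Schubert normal form with slope $(2n+1)/(4n+1)$ and apply the Riley recipe~\cite{R1}: the meridian word has the form $\prod_{i=1}^{2n} s^{\epsilon_i} t^{-\epsilon_i}$ with $\epsilon_i = (-1)^{\lfloor i(2n+1)/(4n+1)\rfloor}$. For $q = 4n+1$ the sign sequence $\{\epsilon_i\}$ is periodic of period $4$, and one length-$4$ block multiplies out to exactly $ts^{-1}t^{-1}s$, so the full product telescopes to $(ts^{-1}t^{-1}s)^n$. This should be compared with the Hoste--Shanahan presentation~\cite{HS}, which is obtained from ours by the change-of-orientation substitution $s \mapsto s^{-1}$, $t \mapsto t^{-1}$, replacing $ts^{-1}t^{-1}s$ by its inverse $s^{-1}tst^{-1}$ and matching their word up to relabeling.

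The main obstacle is the sign and orientation bookkeeping: the exponents inside $w$ depend simultaneously on the crossing conventions of Figure~\ref{fig:T2n}, the chosen orientation of the strands, and which overstrand is labeled $s$ versus $t$. Keeping these three choices consistent across the Wirtinger derivation, the Riley/Schubert cross-check, and the comparison with the opposite-orientation presentation of~\cite{HS} is where errors are likeliest; once the $n=1$ case is pinned down carefully the induction on $|n|$ is routine.
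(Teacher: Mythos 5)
Your overall route is essentially the paper's own justification: the paper gives no computation for Proposition~\ref{prop:fundamentalGroup}, but supports it exactly as in your cross-check, namely by citing the Hoste--Shanahan presentation~\cite{HS} (computed there with the opposite crossing convention) and by remarking that the presentation can be read off from the Schubert normal form of $T_{2n}$ with slope $(2n+1)/(4n+1)$ via Riley~\cite{R1}. Your primary Wirtinger derivation with induction on $|n|$ is a standard, more self-contained supplement, and that plan is sound.

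However, the explicit Riley formula in your cross-check is mis-stated, and as written the product does not telescope to $(ts^{-1}t^{-1}s)^n$. For $\mathfrak{b}(q,p)$ with $q=4n+1$, $p=2n+1$, the correct word is the alternating word of length $q-1=4n$, namely $w=t^{\epsilon_1}s^{\epsilon_2}t^{\epsilon_3}s^{\epsilon_4}\cdots s^{\epsilon_{4n}}$ with $\epsilon_j=(-1)^{\lfloor jp/q\rfloor}$ for $j=1,\dots,4n$; since $\lfloor j(2n+1)/(4n+1)\rfloor=\lfloor j/2\rfloor$ in this range, the sign sequence is $+,-,-,+$ repeated, and grouping in fours gives $w=(ts^{-1}t^{-1}s)^n$ with relation $sw=wt$, i.e., $swt^{-1}w^{-1}=1$, as claimed. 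Your paired form $\prod_{i=1}^{2n}s^{\epsilon_i}t^{-\epsilon_i}$ with $\epsilon_i=(-1)^{\lfloor i(2n+1)/(4n+1)\rfloor}$ has two slips: the pair signs must be indexed by the odd positions, $\epsilon_{2i-1}=(-1)^{i-1}$, which alternate, whereas your indexing yields the non-alternating pattern $+,-,-,+$ on the pairs and produces $st^{-1}s^{-1}t\,s^{-1}t\,st^{-1}\cdots$, already wrong at $n=2$; and each pair should read $t^{\pm}s^{\mp}$, not $s^{\pm}t^{\mp}$, to match the relator $swt^{-1}w^{-1}$ rather than its $s\leftrightarrow t$ relabeling. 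A similar small slip occurs in your comparison with~\cite{HS}: the substitution $s\mapsto s^{-1}$, $t\mapsto t^{-1}$ sends $ts^{-1}t^{-1}s$ to $t^{-1}sts^{-1}$, which agrees with the inverse $s^{-1}tst^{-1}$ only after the same relabeling --- consistent with your ``up to relabeling'' caveat, but it should be pinned down. These are precisely the bookkeeping points you flagged as the main obstacle; once corrected, your argument goes through and recovers the stated presentation.
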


%%%%%%%%%%%%%%%%%%%%%%%%%%%%%%%%%%%%%%%%%%%%%%%%%%%%%%%%%%%%%
%%%%%%%%%%%%%%%%%%%%%%%%%%%%%%%%%%%%%%%%%%%%%%%%%%%%%%%%%%%%%
\section{The complex distance polynomial and A-polynomial} \label{sec:poly}
Given a twist knot $T_{2n}$ and a set of generators, $\{s,t\}$, we identifiy the set of representations, $R$, of $\pi_1 (X_{2n})$ into $SL(2, \C)$ with $R\left(\pi_1 (X_{2n})\right)=\{(\eta(s),\eta(t)) \ |\ \eta \in R\}$. 
 Since the defining relation of 
 $\pi_1 (X_{2n})$ gives the defining equation of $R\left(\pi_1 (X_{2n})\right)$~\cite{R3}, $R\left(\pi_1 (X_{2n})\right)$ can be thought of an affine algebraic set in $\C^{2}$. 
$R\left(\pi_1 (X_{2n})\right)$ is well-defined upto isomorphisms which arise from changing the set of generators. We say elements in $R$ which differ by conjugations in $SL(2, \C)$ are equivalent. 

 We use two coordinates to give the structure of the affine algebraic set to $R\left(\pi_1 (X_{2n})\right)$. Equivalently, for some $O \in SL(2, \C)$, we consider both $\eta$ and $\eta^{\prime}=O^{-1} \eta O$:

For the complex distance polynomial, we use for the coordinates

$$\eta(s)=\left[\begin{array}{cc}
  ({M+1/M})/2    &  e^{\frac{\rho}{2}}   ({M-1/M})/2     \\
       e^{-\frac{\rho}{2}} ({M-1/M})/2    &  ({M+1/M})/2
                     \end{array} \right],$$

\medskip

$$\eta(t)=\left[\begin{array}{cc}
         ({M+1/M})/2 &  e^{-\frac{\rho}{2}} ({M-1/M})/2       \\
          e^{\frac{\rho}{2}} ({M-1/M})/2    &   ({M+1/M})/2
                 \end{array}  \right],$$

and for the A-polynomial, 
\begin{center}
$$\begin{array}{ccccc}
\eta^{\prime}(s)=\left[\begin{array}{cc}
                       M &       1 \\
                        0      & M^{-1}  
                     \end{array} \right]                          
\text{,} \ \ \
\eta^{\prime}(t)=\left[\begin{array}{cc}
                   M &  0      \\
                   t      & M^{-1} 
                 \end{array}  \right].
\end{array}$$
\end{center}

In~\cite{HMP}, the complex distance polynomial of $T_{2n}$ is presented recursively. Theorem~\ref{thm:cpolynomial-ex} gives it explicitly. It is the defining polynomial of algebraic set $R\left(\pi_1 (X_{2n})\right)$ with the set of generators given in Proposition~\ref{prop:fundamentalGroup} and with the coordinates $\eta(s)$ and $\eta(t)$ in $SL(2, \C)$.
The actual computation in~\cite{HMP} is done after setting $M=e^{\frac{i \alpha}{2}}$ and then the variables are changed to $B=\cos  \frac{\alpha}{2}$ and $V=\cosh \rho$.

\begin{Thm}~\cite[Theorem 3.1]{HMP} \label{thm:cpolynomial}
For $B=\cos  \frac{\alpha}{2}$, $V=\cosh \rho$ is a root of the following complex distance polynomial $P_{2n}=P_{2n}(V,B)$ which is given recursively by 

\medskip
\begin{equation*}
P_{2n} = \begin{cases}
 \left(\left(4 B^4-8 B^2+4\right) V^2-4 B^4+8 B^2-2\right) P_{2(n-1)} -P_{2(n-2)} \ 
\text{if $n>1$} \\
 \left(\left(4 B^4-8 B^2+4\right) V^2-4 B^4+8 B^2-2\right) P_{2(n+1)}-P_{2(n+2)}  \
\text{if $n<-1$}
\end{cases}
\end{equation*}
\medskip

\noindent with initial conditions
\begin{equation*}
\begin{split}
P_{-2} (V,B) & =\left(2 B^2-2\right) V+2 B^2-1,\\
P_{0} (V,B) & = 1, \\
P_{2} (V,B) & =\left(4 B^4-8 B^2+4\right) V^2+\left(2-2 B^2\right) V-4 B^4+6 B^2-1.\\
\end{split}
\end{equation*}
\end{Thm}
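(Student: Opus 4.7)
The plan is to exploit the Cayley--Hamilton theorem on the matrix $c := \eta(t)\eta(s)^{-1}\eta(t)^{-1}\eta(s)$, so that $\eta(w) = c^n$ satisfies a Chebyshev-type recursion in $n$ whose defining scalar is $T := \mathrm{tr}(c)$. First I would compute $\tau := \mathrm{tr}(\eta(t)\eta(s)^{-1})$ by direct matrix multiplication using $M = e^{i\alpha/2}$, so that $(M+M^{-1})/2 = B$ and $\bigl((M-M^{-1})/2\bigr)^2 = B^2-1$; this yields $\tau = 2B^2 - 2V(B^2-1)$. Plugging into the standard $SL(2,\mathbb{C})$ commutator trace identity
\begin{equation*}
\mathrm{tr}(ABA^{-1}B^{-1}) = \mathrm{tr}(A)^2 + \mathrm{tr}(B)^2 + \mathrm{tr}(AB)^2 - \mathrm{tr}(A)\,\mathrm{tr}(B)\,\mathrm{tr}(AB) - 2
\end{equation*}
with $A = \eta(t)$ and $B = \eta(s)^{-1}$, together with $\mathrm{tr}(\eta(s)) = \mathrm{tr}(\eta(t)) = 2B$, one obtains after simplification
\begin{equation*}
T = (4B^4 - 8B^2 + 4)\, V^2 - 4B^4 + 8B^2 - 2,
\end{equation*}
which is exactly the coefficient appearing in the stated recursion.

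Since $c \in SL(2,\mathbb{C})$, Cayley--Hamilton yields $c^2 - Tc + I = 0$ and hence $c^{n+1} = T c^n - c^{n-1}$. Writing $c^n = \alpha_n\, c + \beta_n\, I$, the scalar sequences $\alpha_n$ and $\beta_n$ both obey $x_{n+1} = T x_n - x_{n-1}$. The defining relation $\eta(s)\eta(w) = \eta(w)\eta(t)$ of Proposition~\ref{prop:fundamentalGroup} then becomes the matrix equation
\begin{equation*}
\alpha_n \bigl(\eta(s) c - c\, \eta(t)\bigr) + \beta_n \bigl(\eta(s) - \eta(t)\bigr) = 0,
\end{equation*}
whose four entries are polynomial in $V$, $B$, $M$, and $\nu := (M-M^{-1})/2$. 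By the $e^{\rho/2} \leftrightarrow e^{-\rho/2}$ symmetry (equivalently $\rho \mapsto -\rho$, which fixes $V = \cosh\rho$) of the chosen forms of $\eta(s)$ and $\eta(t)$, these four entries collapse, after dividing out common powers of $\nu$ and $M$, to a single scalar polynomial in $V$ and $B$; this polynomial is $P_{2n}(V,B)$. Because $P_{2n}$ is a fixed linear combination of $\alpha_n$ and $\beta_n$ with coefficients depending only on $V$ and $B$, it inherits the recursion $P_{2(n+1)} = T\, P_{2n} - P_{2(n-1)}$.

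The initial conditions $P_0$, $P_{\pm 2}$ would then be verified by direct substitution: for $n = 0$ the relation reduces to a tautology, giving the normalization $P_0 = 1$; for $n = \pm 1$ one expands $c^{\pm 1}$ explicitly, forms $\eta(s) c^{\pm 1} - c^{\pm 1}\eta(t)$, and reads off the stated polynomials. The main obstacle is the reduction step above: one must verify that the four entries of $\eta(s)c^n - c^n\eta(t)$ really share a common scalar factor $P_{2n}$, so that a single polynomial equation cuts out the representation variety at fixed meridian eigenvalue, and that in doing so no spurious factors of $M$ or $\nu$ get absorbed into $P_{2n}$. The case $n < 0$ is handled symmetrically by running Cayley--Hamilton backwards, using $c^{-1} = T\cdot I - c$.
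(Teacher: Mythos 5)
Your proposal is correct and follows essentially the same route as the source of this theorem: the paper itself gives no proof, importing the statement verbatim from \cite[Theorem 3.1]{HMP}, where the argument is exactly this Cayley--Hamilton/Chebyshev reduction of the relation $\eta(s)\,c^n = c^n\,\eta(t)$, and your computations check out ($\operatorname{tr}\bigl(\eta(t)\eta(s)^{-1}\bigr) = 2B^2 - 2(B^2-1)V$ and $\operatorname{tr}(c)$ equal to the stated recursion coefficient). The step you flag as the main obstacle closes cleanly: in these coordinates $\eta(s)^{T} = \eta(t)$, hence $c^{T} = c$ and $\eta(s)c^n - c^n\eta(t)$ is antisymmetric, so the diagonal entries vanish identically and exactly one scalar equation survives, namely $\nu\bigl(\alpha_n(a\,c_{22} - c_{11}/a) + \beta_n(a - a^{-1})\bigr) = 0$ with $a = e^{\rho/2}$ and $\nu = (M-M^{-1})/2$; since $a\,c_{22} - c_{11}/a$ is odd under $a \mapsto a^{-1}$ it is divisible by $a - a^{-1}$ with quotient a polynomial in $V$ and $B$, yielding $P_{2n} = \alpha_n P_2 + \beta_n$, which reproduces the stated $P_0$, $P_{\pm 2}$ (e.g.\ $P_{-2} = \operatorname{tr}(c) - P_2$) and inherits the recursion as you say.
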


\begin{Thm} \label{thm:cpolynomial-ex}
For $B=\cos  \frac{\alpha}{2}$, $V=\cosh \rho$ is a root of the following complex distance polynomial $P_{2n}=P_{2n}(V,B)$ which is given explicitly by 

\medskip
\begin{equation*}
P_{2n} = \begin{cases}
 \sum_{i=0}^{2n} 
\left(\begin{smallmatrix}
n+ \lfloor \frac{i}{2} \rfloor \\
i
\end{smallmatrix}\right)
 \left(2 (B^2-1) (1-V)\right)^{i} 
\left(1+2 (V-1)^{-1}\right)^{\lfloor \frac{1+i}{2}  \rfloor}
\qquad \ \ \
\text{if $n \geq 0$} \\
 \sum_{i=0}^{-2n-1} 
\left(\begin{smallmatrix}
-n+ \lfloor \frac{(i-1)}{2} \rfloor \\
i
\end{smallmatrix}\right)
 \left(2 (B^2-1) (V-1) \right)^i 
 \left(1+ 2 (V-1)^{-1}\right)^{\lfloor \frac{1+i}{2}  \rfloor}  
\text{if $n<0$}. 
\end{cases}
\end{equation*}
\end{Thm}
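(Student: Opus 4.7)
The plan is to prove Theorem~\ref{thm:cpolynomial-ex} by showing that the proposed explicit polynomial satisfies both the initial conditions and the two-term recurrence of Theorem~\ref{thm:cpolynomial}; since a second-order linear recurrence with two specified initial values uniquely determines its solution, this identifies the two descriptions.

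First I would pass to the convenient variables $x = 2(B^2-1)(V-1)$ and $y = 1 + 2/(V-1) = (V+1)/(V-1)$. A direct expansion gives $u := x^2 y = 4(B^2-1)^2(V^2-1)$, so that the recurrence coefficient of Theorem~\ref{thm:cpolynomial} reads precisely $u + 2$. For $n \geq 0$, separating the proposed sum by the parity of $i$ yields
\[
P_{2n}^{\mathrm{ex}} = E_n(u) - xy\, O_n(u), \quad E_n(u) = \sum_{k \geq 0} \binom{n+k}{2k} u^k, \quad O_n(u) = \sum_{k \geq 0} \binom{n+k}{2k+1} u^k,
\]
since $(-x)^{2k} y^k = u^k$ and $(-x)^{2k+1} y^{k+1} = -xy \cdot u^k$. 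For $n < 0$ with $m = -n$, a parallel splitting gives $P_{2n}^{\mathrm{ex}} = E_{m-1}(u) + xy\, O_m(u)$, the sign flip in the base absorbed because $x^{2k}$ is even in $x$.

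The heart of the proof is to show $E_n(u) = (u+2) E_{n-1}(u) - E_{n-2}(u)$ and similarly for $O_n$; since the prefactor $xy$ does not depend on $n$, this will propagate to the claimed recurrence for $P_{2n}^{\mathrm{ex}}$. Expanding $(u+2)E_{n-1} - E_{n-2}$ and collecting the coefficient of $u^k$ reduces the first identity to
\[
\binom{n+k-2}{2k-2} + 2\binom{n+k-1}{2k} - \binom{n+k-2}{2k} = \binom{n+k}{2k},
\]
which follows from Pascal's rule applied twice; the analogous identity with $2k$ replaced by $2k+1$ handles $O_n$. The base cases $P_0, P_{\pm 2}$ are then verified by direct substitution, after which the induction runs forward for $n \geq 0$ and backward for $n < 0$.

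The main obstacle, and essentially the only nontrivial aspect, is bookkeeping. One must keep straight (i) the sign convention that differs between the $n \geq 0$ and $n < 0$ branches in the base $2(B^2-1)(1-V)$ versus $2(B^2-1)(V-1)$, and (ii) the index shift $\lfloor (i-1)/2 \rfloor$ versus $\lfloor i/2 \rfloor$ appearing in the binomial coefficient in the two branches. Both issues cleanly disappear after the passage to $u$ and $xy$ and the parity split described above, so once this reduction is carried out with care, the binomial identity does all the remaining work.
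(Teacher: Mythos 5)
Your proposal is correct, and it is a genuine (and somewhat cleaner) variant of the paper's argument. The first half coincides: your parity split $P_{2n}^{\mathrm{ex}} = E_n(u) - xy\,O_n(u)$ for $n \geq 0$ and $E_{-n-1}(u) + xy\,O_{-n}(u)$ for $n < 0$, with $u = x^2y = 4(B^2-1)^2(V^2-1)$ and recurrence coefficient $u+2$, is exactly the paper's rewriting of the claimed formula $f_{2n}$ into the two parity sums. The divergence is in the second half: the paper \emph{solves} the recurrence, writing the solution in a Chebyshev-like closed form $h_n = \sum_k \binom{n+1}{2k+1} z^{n-2k}(z^2-1)^k$ with $z = u/2+1$, expanding, and collapsing a double sum via the identity $\sum_i \binom{n-i}{i}\binom{n-2i}{j}(-1)^i 2^{n-2i-j} = \binom{n+1+j}{2j+1}$ to recover $h_n = \sum_j \binom{n+1+j}{2j+1} u^j$ and hence $f_{2n}$; you instead \emph{verify} that the explicit formula satisfies the recurrence, which by your Pascal-rule identity $\binom{n+k-2}{2k-2} + 2\binom{n+k-1}{2k} - \binom{n+k-2}{2k} = \binom{n+k}{2k}$ (and its odd-index analogue) reduces to elementary manipulations, and you then invoke uniqueness of solutions of a second-order linear recurrence with fixed initial values. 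Your route buys a substantially lighter computation — two applications of Pascal's rule replace the paper's double-sum collapse — while the paper's route is derivational rather than verificational, showing where the explicit formula comes from. One bookkeeping point you gloss over but which does work out: the backward induction at $n=-2$ crosses branches, since $P_{-4} = (u+2)P_{-2} - P_0$ mixes the $n<0$ formula with the $n \geq 0$ value $P_0 = 1$; this is covered by your verified base cases together with $O_2 = (u+2)O_1 - O_0$ and $E_1 = (u+2)E_0 - 1$ (equivalently the convention $E_{-1} = 1$), so no gap results.
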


\begin{proof}
We write $f_{2n}$ for the claimed formula and show that $f_{2n}=P_{2n}$.
$f_{2n}$ can be rewritten as
\begin{align*}
f_{2n}=\begin{cases} \sum_{j=0}^{n} 
\left(\begin{smallmatrix}
n+j \\
2j
\end{smallmatrix}\right) 
\left(2(B^2-1) (V-1)\right)^{2j} \left(1+2(V-1)^{-1} \right)^{j}\\
- \sum_{j=0}^{n} 
\left(\begin{smallmatrix}
n+j \\
2j+1
\end{smallmatrix}\right) 
\left(2(B^2-1) (V-1)\right)^{2j+1} \left(1+2(V-1)^{-1} \right)^{j+1} \ \text{if $n \geq 0$} \\
\sum_{j=0}^{-n-1} 
\left(\begin{smallmatrix}
-n-1+j \\
2j
\end{smallmatrix}\right) 
\left(2(B^2-1) (V-1)\right)^{2j} \left(1+2(V-1)^{-1} \right)^{j} \\
+ \sum_{j=0}^{-n-1} 
\left(\begin{smallmatrix}
-n+j \\
2j+1
\end{smallmatrix}\right) 
\left(2(B^2-1) (V-1)\right)^{2j+1} \left(1+2(V-1)^{-1} \right)^{j+1} \ \text{if $n < 0$}.
\end{cases}
\end{align*}

Now, the theorem follows by solving the recurrence formula with the initial conditions given in Theorem~\ref{thm:cpolynomial}:
\begin{comment}
\begin{equation*}
P_{2n} = \begin{cases}
 \left(4 (B^2-1)^2 (V-1)^2+\left(8 (B^2-1)^2 -2 (B^2-1)\right) (V-1)-4 (B^2-1) +1\right)\\
\times h_{n-1}-h_{n-2} \qquad 
\text{if $n>1$} \\
 \left(2 \left(B^2-1\right) (V-1)+4 (B^2-1)+1\right) h_{-n-1}-h_{-n-2}  \qquad
\text{if $n<-1$}
\end{cases}
\end{equation*}
\end{comment}
\begin{equation*}
P_{2n} = \begin{cases}
 \left[\left(\left(2 (B^2-1) (V-1)\right)^2 -2 (B^2-1) (V-1) \right) \left(1+2(V-1)^{-1} \right)  +1\right]\\
\times h_{n-1}-h_{n-2} \qquad 
\text{if $n>1$} \\
 \left(2 \left(B^2-1\right) (V-1) \left(1+2(V-1)^{-1} \right)+1\right) h_{-n-1}-h_{-n-2}  \qquad
\text{if $n<-1$}
\end{cases}
\end{equation*}
\noindent where
\begin{align*}
h_{n}& =
 \sum_{k=0}^{\lfloor \frac{n}{2} \rfloor} 
\left(\begin{smallmatrix}
n+ 1 \\
2k+1
\end{smallmatrix}\right)
\left(2(B^2-1)^2 (V-1)^2 \left(1+2(V-1)^{-1} \right)+1\right)^{n-2k} \\
&\times \left(\left(2(B^2-1)^2 (V-1)^2\left(1+2(V-1)^{-1} \right)+1\right)^2-1\right)^{k}\\
 = & \sum_{i=0}^{\lfloor \frac{n}{2} \rfloor} 
\left(\begin{smallmatrix}
n- i \\
i
\end{smallmatrix}\right) (-1)^i
\left[ \sum_{j=0}^{n-2i} 
\left(\begin{smallmatrix}
n- 2i \\
j
\end{smallmatrix}\right) 2^{n-2i-j}
\left(\left(2(B^2-1) (V-1)\right)^2 \left(1+2(V-1)^{-1} \right)\right)^{j} \right] \\
= & \sum_{j=0}^{n} \left[\sum_{i=0}^{\lfloor \frac{n-j}{2} \rfloor} 
\left(\begin{smallmatrix}
n- i \\
i
\end{smallmatrix}\right) 
\left(\begin{smallmatrix}
n- 2i \\
j
\end{smallmatrix}\right) (-1)^i 2^{n-2i-j}\right]
\left(\left(2(B^2-1) (V-1)\right)^2 \left(1+2(V-1)^{-1} \right)\right)^{j} \\
= & \sum_{j=0}^{n} 
\left(\begin{smallmatrix}
n+1+j \\
2j+1
\end{smallmatrix}\right) 
\left(\left(2(B^2-1) (V-1)\right)^2 \left(1+2(V-1)^{-1} \right)\right)^{j}. 
\end{align*}
$f_{2n}$ can be obtained by simplifying the above formula.
\end{proof}

\medskip

\begin{proof}[proof of Theorem~\ref{thm:A-polynomial-re} and Theorem~\ref{thm:A-polynomial}]
Using Theorem 4.4 in~\cite{HMP}, we get the following  Lemma~\ref{lem:pytha} which relates the zeroes of the complex distance polynomial, $P_{2n}=P_{2n}(V,B)$, and the zeroes of the A-polynomial, $A_{2n}=A_{2n}(L,M)$. Theorem~\ref{thm:A-polynomial-re} (resp. Theorem~\ref{thm:A-polynomial}) can be obtained from $P_{2n}$ of Theorem~\ref{thm:cpolynomial} (resp. Theorem~\ref{thm:cpolynomial-ex}) by replacing $V$ with $\left((M^2+1) (LM^2-1)\right)/\left((M^2-1) (LM^2+1)\right)$  using the equality in Lemma~\ref{lem:pytha} and $B$ with $\left(M+M^{-1}\right)/2$ and by clearing denominators.
\end{proof}

\medskip

\begin{Lem}  \label{lem:pytha}
\begin{align}
iV&=A \frac{LM^2-1}{LM^2+1} \ \ \ and \ \ \ L=M^{-2} \frac{A+iV}{A-iV},\\ V&=\frac{(M^2+1) (LM^2-1)}{(M^2-1) (LM^2+1)} \ \ \ and \ \ \ L=M^{-2} \frac{VM^2-V+M^2+1}{-VM^2+V+M^2+1}.
\end{align}
\end{Lem}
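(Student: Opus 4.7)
The plan is to reduce the lemma to a single substantive identity and invoke a Pythagorean-type result from the geometry of the cone-manifold. First, observe that equations (1) and (2) are algebraically equivalent under the parameterization $M = e^{i\alpha/2}$: one then has $A = \cot(\alpha/2) = i(M^2+1)/(M^2-1)$, and substituting this expression for $A$ into the first formula of (1) and rearranging gives exactly the first formula of (2). Moreover, each of the two stated inversions $L = M^{-2}(A+iV)/(A-iV)$ and $L = M^{-2}(VM^2-V+M^2+1)/(-VM^2+V+M^2+1)$ is obtained from its corresponding forward identity by solving a linear equation in $L$ via cross-multiplication. So the whole lemma reduces to establishing the single identity
\[
V \;=\; \frac{(M^2+1)(LM^2-1)}{(M^2-1)(LM^2+1)}.
\]

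The strategy for this identity is to compare invariants across the two parameterizations of the representation variety introduced in Section 3. In the $\eta$-coordinates the generators are placed symmetrically so that the axes of $\eta(s)$ and $\eta(t)$ are at complex distance $\rho$ in $\mathbb{H}^3$, with $V = \cosh\rho$. In the $\eta'$-coordinates, $s$ is upper-triangular with eigenvalues $M^{\pm 1}$ and the longitude $\lambda$, read off from the presentation in Proposition~\ref{prop:fundamentalGroup}, has $\eta'(\lambda)$ with eigenvalue $L$. Since the two representations are conjugate in $SL(2,\C)$ for each point of the variety, trace-type invariants of $s$, $t$, and $\lambda$ must match between the two presentations.

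The geometric content, supplied by Theorem 4.4 of~\cite{HMP}, is a Pythagorean-type identity in $\mathbb{H}^3$: the right-angled configuration formed by the axes of $\eta(s)$, $\eta(t)$, and their common perpendicular yields a trigonometric relation between the complex length of the longitude $\lambda$ and the pair $(\rho, \alpha)$. The plan is therefore to invoke this identity and convert from the complex-length form to eigenvalue coordinates $(L, M)$, which produces exactly the rational equality displayed above. The inverse expressions for $L$ follow immediately.

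The main obstacle is bookkeeping: branches of $\log$ and $\cosh^{-1}$, signs of $\sinh$, and the factor $M^2$ appearing in $LM^2$ rather than $L$ alone. This extra $M^2$ tracks the contribution of the meridian twist to the longitude's axis translation (essentially the linking data on the cusp torus), and it must be matched carefully against the conventions used for $\eta$ and $\eta'$. Once this translation is carried out correctly, the identity is a short algebraic manipulation in $V = \cosh\rho$, $B = \cos(\alpha/2)$, and $M$; substituting the result into the complex distance polynomial $P_{2n}$ and clearing denominators then yields the A-polynomial formulas claimed in Theorems~\ref{thm:A-polynomial-re} and~\ref{thm:A-polynomial}.
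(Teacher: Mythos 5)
Your proposal is correct and follows essentially the same route as the paper: the substantive identity is taken from Theorem 4.4 of \cite{HMP}, and everything else is the substitution $A/i=(M^2+1)/(M^2-1)$ coming from $M=e^{i\alpha/2}$ together with solving for $L$ by cross-multiplication. The only cosmetic difference is direction — the paper quotes (1) verbatim from \cite{HMP} and deduces (2), while you reduce both displays to the first identity of (2) before invoking the same theorem — so your worry about converting a complex-length form into eigenvalue coordinates is moot, since the cited result already states (1) in the $(L,M,V,A)$ variables.
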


\begin{proof}
$(1)$ is Theorem 4.4 in ~\cite{HMP}.

Since 
\begin{align*}
\frac{A}{i} &=\frac{\cos{\frac{\alpha}{2}}}{i\sin{\frac{\alpha}{2}}} 
 =\frac{\cosh{\frac{i\alpha}{2}}}{\sinh{\frac{i\alpha}{2}}} 
 =\frac{\frac{M+M^{-1}}{2}}{\frac{M-M^{-1}}{2}}
 =\frac{M^2+1}{M^2-1},
\end{align*}
we get the first equality of $(2)$. By solving the first equality for $L$, we get the second equality of $(2)$.
\end{proof}

\medskip

\begin{proof}[Proof of Theorem~\ref{thm:edge-polynomial}]
 By Lemma~\ref{lem:edge-polynomial}, we have Newton polygons, $NP$, associated to $A_{2n}$ in Figure~\ref{fig:NewtonPolygon}. Let us only consider nonzero slopes if $n \neq -1$.

 When $n >0$, $NP$ has two slopes $-4n$ and $4$. When the slope is $-4n$ and the edge has the term $-M^{4n}$ on it, there are only two terms of $A_{2n}$ appearing along the edge of the slope. From the explicit formula of $A_{2n}$ of Theorem~\ref{thm:A-polynomial}, they are $-M^{4n}$ and $L$. The term $-M^{4n}$ occurs when $i=2n$. We get the term $L$ by adding two terms $(n-1) L$ which occurs when $i=2n-1$ and $-nL$ which occurs when $i=2n$. From $L-M^{4n}$ by substituting $Lt$ for $M^{4n}$ and dividing by $L$, we get the edge polynomial $1-t$. Hence, on the right above edge with the same slope $-4n$, we have $t-1$ for the edge polynomial. When the slope is $4$ and the term $-M^{4n}$ is on the edge, from the explicit formula of $A_{2n}$ of Theorem~\ref{thm:A-polynomial}, 
$ n LM^{4n+4}$ is on the edge and the term occurs when $i=2n$. We can get the edge polynomial from the sum of the terms which appear on this edge by substituting $t$ for $LM^{4}$ and dividing by $M^{4n}$. Hence the coefficient of $LM^{4n+4}$ is the coefficient of $t$ of the edge polynomial. Since we know the constant term is $-1$ and the coefficient of $t$ is $n$, we get the edge polynomial $(t-1)^n$ when $n$ is odd and $-(t-1)^n$ when $n$ is even because the fact that the edge polynomials of two-bridge knots are up to sign the product of some powers of $t-1$ and some powers of $t+1$~\cite{CL} and the coefficient conditions forces the power of $t+1$ to be zero. Hence, on the right below edge with the same slope $4$, we have $-(t-1)^n$ for the edge polynomial.

 When $n <0$, $NP$ has two slopes $-4n+2$ and $4$. When the slope is $-4n+2$ and the edge has the term $1$ on it, there are only two terms of $A_{2n}$ appearing along the edge of the slope. From the explicit formula of $A_{2n}$ of Theorem~\ref{thm:A-polynomial}, they are $1$ and $LM^{-4n+2}$. The term $1$ occurs when $i=2n$. We get the term $LM^{-4n+2}$ by adding two terms $(n+1) LM^{-4n+2}$ which occurs when $i=-2n-2$ and $-nLM^{-4n+2}$ which occurs when $i=-2n-1$. From $1+LM^{-4n+2}$ by substituting $t$ for $LM^{-4n+2}$, we get the edge polynomial $t+1$. Hence, on the right above edge with the same slope $-4n+2$, we have $t+1$ for the edge polynomial. When the slope is $4$ and the term $LM^{-4n+2}$ is on the edge, from the explicit formula of $A_{2n}$ of Theorem~\ref{thm:A-polynomial}, 
$(n+1) L^2M^{-4n+6}$ is on the edge. We get the term $(n+1) L^2M^{-4n+6}$ by adding two terms $\frac{(n+1)(n+2)}{2} L^2M^{-4n+6}$ which occurs when $i=-2n-2$ and $\frac{-n (n+1)}{2} L^2M^{-4n+6}$ which occurs when $i=-2n-1$. We can get the edge polynomial from the sum of the terms which appear on this edge by substituting $t$ for $LM^{4}$ and dividing by $LM^{-4n+2}$. Hence the coefficient of $L^2M^{-4n+6}$ is the coefficient of $t$ of the edge polynomial. Since we know the constant term is $1$ and the coefficient of $t$ is $n+1$, we get the edge polynomial 
$(t-1)^{-n-1}$ when $n$ is odd and $-(t-1)^{-n-1}$ when $n$ is even because the fact that the edge polynomials of two-bridge knots are up to sign the product of some powers of $t-1$ and some powers $t+1$~\cite{CL} and the coefficient conditions forces the power of $t+1$ to be zero. Hence, on the right below edge with the same slope $4$, we have $(t-1)^{-n-1}$ for the edge polynomial.
\end{proof}

\medskip
\begin{Lem}~\label{lem:edge-polynomial}
The Newton polygons associated to $A_{2n}$ are polygons in Figure~\ref{fig:NewtonPolygon}.
\end{Lem}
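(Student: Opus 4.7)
The plan is to read off the Newton polygon of $A_{2n}$ directly from the explicit formula in Theorem~\ref{thm:A-polynomial}. I would first rewrite the $i$th summand of that formula, using the identity $(LM^2-M^{-2})^{\lceil i/2\rceil} = M^{-2\lceil i/2\rceil}(LM^4-1)^{\lceil i/2\rceil}$, so that each summand becomes, up to an overall sign and a monomial prefactor, a product of four binomial powers: $(1+LM^2)^{2n-i}$, $(1-M^2)^i$, $(L-1)^{\lfloor i/2\rfloor}$, and $(LM^4-1)^{\lceil i/2\rceil}$. The Newton polytope of such a product is the Minkowski sum of four line segments through the origin with directions $(1,2)$, $(0,2)$, $(1,0)$, $(1,4)$, scaled respectively by $2n-i$, $i$, $\lfloor i/2\rfloor$, $\lceil i/2\rceil$; this is a zonotope in the $(\deg_L,\deg_M)$-plane whose vertices and edges can be enumerated explicitly.

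Next, I would take the union of these zonotopes over $i=0,\ldots,2n$ (respectively $i=0,\ldots,-2n-1$ when $n<0$) and compute its convex hull. Because the exponent vector $(2n-i,\,i,\,\lfloor i/2\rfloor,\,\lceil i/2\rceil)$ varies monotonically in $i$, only one or two values of $i$ contribute the outermost monomial in any given extreme direction of the plane. This lets me read off the candidate vertices of $N(A_{2n})$ and match them to those depicted in Figure~\ref{fig:NewtonPolygon}, verifying in particular that the nontrivial edge slopes are $-4n$ and $4$ when $n>0$, and $-4n+2$ and $4$ when $n<0$.

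Finally, I would verify that the coefficient of each candidate vertex is nonzero. At vertices supported by a single summand, the coefficient is, up to sign, a single binomial coefficient of the form $\binom{n+\lfloor i/2\rfloor}{i}$, which is manifestly nonzero. At vertices where two summands collide, I would collect the two contributions and show by a Vandermonde-type identity that the sum is nonzero. An alternative at such vertices is to invoke the recursion of Theorem~\ref{thm:A-polynomial-re}: since the Newton polygons of $A_u$ and of $M^4(1+LM^2)^4$ are explicit, tracking the extreme monomials of $A_u A_{2(n-1)} - M^4(1+LM^2)^4 A_{2(n-2)}$ by induction on $|n|$, with the base cases $A_{-4}, A_{-2}, A_0, A_2$ given in that theorem, provides an independent proof.

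The main obstacle is this non-cancellation step: the identification of the candidate vertices is essentially bookkeeping, but showing that the relevant binomial sums at the colliding vertices are nonzero is the substantive combinatorial content. Cross-checking against the explicit initial data listed in Theorem~\ref{thm:A-polynomial-re}, and against the recursion on Minkowski sums under multiplication, gives a reliable way to confirm the shape of the polygon in both the $n>0$ and $n<0$ branches.
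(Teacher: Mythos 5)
Your route is genuinely different from the paper's, and it is viable. You work forwards from the explicit formula of Theorem~\ref{thm:A-polynomial}, writing each summand as a monomial times $(1+LM^2)^{2n-i}\,(1-M^2)^i\,(L-1)^{\lfloor i/2\rfloor}\,(LM^4-1)^{\lceil i/2\rceil}$, so that its Newton polytope is an explicit zonotope, and you then take the convex hull of the union and check vertex survival. The paper instead goes backwards through the recursion of Theorem~\ref{thm:A-polynomial-re}: after the base cases $n=-2,-1,1$, it invokes the Cooper--Long theorem that the $A$-polynomial has coefficients $\pm 1$ at the corners of its Newton polygon~\cite{CL2}, so the polygon is unchanged by reduction mod $2$; mod $2$ the factor $A_u$ has six terms and $M^4(1+LM^2)^4$ collapses to $M^4+L^4M^{12}$, and the induction amounts to overlaying six shifted copies of the polygon of $A_{2(n\mp 1)}$ and two shifted copies of that of $A_{2(n\mp 2)}$. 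That mod-$2$/corner device buys the paper exactly what you single out as your main obstacle: it never has to verify non-vanishing of any coefficient sum, since the corners are units by an external theorem. Conversely, your approach is self-contained (no appeal to~\cite{CL2}) and produces the actual corner coefficients, which the paper must compute anyway in its proof of Theorem~\ref{thm:edge-polynomial}. Two remarks. First, your non-cancellation step is lighter than you fear: at each vertex where summands collide, only two consecutive indices contribute (one of each parity), and the two binomial contributions are, e.g., $(n-1)$ and $-n$ at the corner monomial $L$ when $n>0$, and $(n+1)$ and $-n$ at the corner monomial $LM^{-4n+2}$ when $n<0$, summing to $\mp 1$; no Vandermonde-type identity is needed, and these are precisely the computations already carried out in the paper's proof of Theorem~\ref{thm:edge-polynomial}. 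Second, your fallback via the recursion is not, as stated, independent of coefficient bookkeeping: without the mod-$2$/corner trick you must carry the corner coefficients through the induction in order to exclude cancellation between $A_u A_{2(n-1)}$ and $M^4(1+LM^2)^4 A_{2(n-2)}$ at shared extreme monomials, so that alternative is really a variant of the paper's argument with~\cite{CL2} replaced by extra induction hypotheses.
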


\begin{proof}
The lemma is true when $n=-2$, $-1$, or $1$.

Since the Newton polygon of $A_{2n}$ has ones on the corners up to sign~\cite{CL2}, to determine the shape of the Newton polygon, we only need to consider $A_{2n}$ modulo $2$. We will use  the recursive formula of $A_{2n}$ of Theorem~\ref{thm:A-polynomial-re}. In modulo $2$, $A_u$ has $6$ terms and $M^4 \left(1+L M^2\right)^4$ has two terms $M^4+L^4M^{12}$. The lemma can be proved by induction. You just have to combine six copies of Newton polygons of $A_{2n-1}$ (if $n>1$) or $A_{2n+1}$ (if $n<-2$) shifted by $A_u$ and two copies of Newton polygons of $A_{2n-2}$ \ (if $n>1$) or $A_{2n+1}$ (if $n<-2$) shifted by $M^4+L^4M^{12}$  removing double points.
\end{proof}

\begin{figure} 
\includegraphics{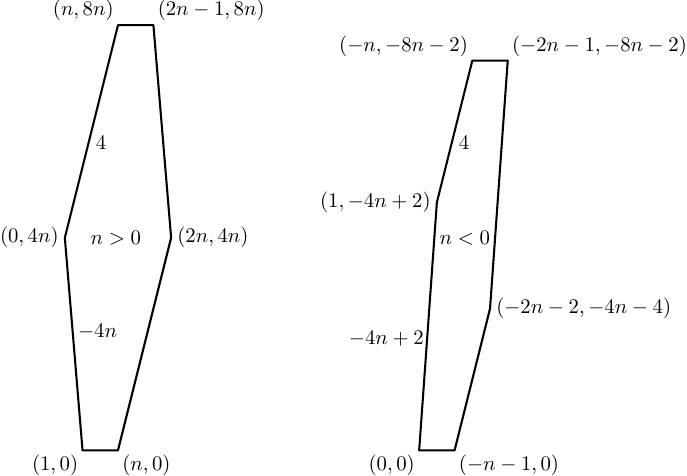}
\caption{Newton polygons of $A_{2n}$.} \label{fig:NewtonPolygon}
\end{figure}
\medskip

\begin{proof} [Proof of Corollary~\ref{cor:boundary}]
The number of boundary components of  two-bridge knots are one or two~\cite{HT}. Hence the number of boundary components of every incompressible surface corresponding to slope $-4n+2$ are bounded above by $2$. 

The orders of roots, roots of unity, of $f_{a/b}$ divide the number of boundary components of every incompressible surface corresponding to slope $a/b$~\cite{CCGLS1}. Hence, when the slope is $-4n+2$, since there is a single root of unity of degree $2$ by Theorem~\ref{thm:edge-polynomial}, the number of boundary components of every incompressible surface corresponding to slope $-4n+2$ is bounded below by $2$.
\end{proof}
%%%%%%%%%%%%%%%%%%%%%%%%%%%%%%%%%%%%%%%%%%%%%%%%%%%%%%%%
%%%%%%%%%%%%%%%%%%%%%%%%%%%%%%%%%%%%%%%%%%%%%%%%%%%%%%%%

\section{Generalized Chern-Simons function}
\label{sec:CSfunction}

The general references for this section are~\cite{HLM3,HLM2,Y1} and~\cite{MeyRub1}. 
We introduce the generalized Chern-Simons function on the family of a twist knot cone-manifold structures. For the oriented knot $T_{2n}$, we orient a chosen meridian $s$ such that the orientation of $s$ followed by orientation of $T_{2n}$ coincides with orientation of $S^3$. Hence, we use the definition of Lens space in~\cite{HLM2} so that we can have the right orientation when the definition of Lens space is combined with the following frame field. On the Riemannian manifold $S^3-T_{2n}-s$ we choose a special frame field $\Gamma$. A \emph{special} frame field $\Gamma=(e_1,e_2,e_3)$ is an orthonomal frame field such that for each point $x$ near $T_{2n}$, $e_1(x)$ has the knot direction, $e_2(x)$ has the tangent direction of a meridian curve, and $e_3(x)$ has the knot to point direction.  A special frame field always exists by the proposition $3.1$ of~\cite{HLM3}. From $\Gamma$ we obtain an orthonomal frame field $\Gamma_{\alpha}$ on $T_{2n}(\alpha)-s$ by the Schmidt orthonormalization process with respect to the geometric structure of the cone manifold $T_{2n}(\alpha)$. Moreover it can be made special by deforming it in a neighborhood of the singular set and $s$ if necessary. $\Gamma^{\prime}$ is an extention of $\Gamma$ to $S^3-T_{2n}$. For each cone-manifold $T_{2n}(\alpha)$, we assign the real number:

\begin{equation*}
I\left(T_{2n}(\alpha)\right)=\frac{1}{2} \int_{\Gamma(S^3-T_{2n}-s)}Q-\frac{1}{4 \pi} \tau(s,\Gamma^{\prime})-\frac{1}{4 \pi} \left(\frac{\beta \alpha}{2 \pi}\right),
\end{equation*}

\noindent where $-2 \pi \leq \beta \leq 2 \pi$, $Q$ is the Chern-Simons form:

\begin{equation*}
Q=\frac{1}{4 \pi^2} \left(\theta_{12} \wedge \theta_{13} \wedge \theta_{23} + \theta_{12} \wedge \Omega_{12} + \theta_{13} \wedge \Omega_{13} + \theta_{23} \wedge \Omega_{23} \right),
\end{equation*}

\noindent and 

\begin{equation*}
\tau(s,\Gamma^{\prime})=-\int_{\Gamma^{\prime}(s)} \theta_{23},
\end{equation*}

\noindent where ($\theta_{ij}$) is the connection $1$-form, ($\Omega_{ij}$) is the curvature $2$-form of the Riemannian connection on $T_{2n}(\alpha)$ and the integral is over the orthonomalizations of the same frame field. When $\alpha = \frac{2 \pi}{k}$ for some positive integer, 
$I \left(T_{2n}\left(\frac{2 \pi}{k}\right)\right)$ (mod $\frac{1}{k}$ if $k$ is even or mod $\frac{1}{2k}$ if $k$ is odd) is independent of the frame field $\Gamma$ and of the representative in the equivalence class $\overline{\beta}$ and hence an invariant of the orbifold $T_{2n}\left(\frac{2 \pi}{k}\right)$. $I \left(T_{2n}\left(\frac{2 \pi}{k}\right)\right)$ (mod $\frac{1}{k}$ if $k$ is even or mod $\frac{1}{2k}$ if $k$ is odd) is called \emph{the Chern-Simons invariant of the orbifold} and is denoted by 
$\text{\textnormal{cs}} \left(T_{2n}\left(\frac{2 \pi}{k}\right) \right)$.

On the generalized Chern-Simons function on the family of the twist knot cone-manifold structures we have the following Schl\"{a}fli formula.

\begin{Thm}(Theorem 1.2 of~\cite{HLM2})~\label{thm:schlafli}
For a family of geometric cone-manifold structures, $T_{2n}(\alpha)$, and differentiable functions $\alpha(t)$ and $\beta(t)$ of $t$ we have
\begin{equation*}
dI \left(T_{2n}(\alpha)\right)=-\frac{1}{4 \pi^2} \beta d \alpha.
\end{equation*} 
\end{Thm}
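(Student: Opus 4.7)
The plan is to establish the formula by differentiating each summand in the definition of $I(T_{2n}(\alpha))$ separately and showing that the contributions containing $\dot\beta$ cancel, leaving exactly $-\tfrac{1}{4\pi^2}\beta\,d\alpha$. Let $\alpha(t)$ and $\beta(t)$ be smooth; they induce a smooth one-parameter family of cone-manifold metrics on $S^3$, and hence a smooth family of connection and curvature forms $\omega_t=(\theta_{ij}(t))$, $\Omega_t=(\Omega_{ij}(t))$ read off in the special frame $\Gamma_{\alpha(t)}$ on $S^3-T_{2n}-s$.

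First I would apply the standard variational identity for the Chern--Simons $3$-form: for any smooth family of connections,
\begin{equation*}
\frac{\partial Q}{\partial t} \;=\; C_1\operatorname{tr}(\dot\omega\wedge\Omega)\;+\;d\!\left(C_2\operatorname{tr}(\dot\omega\wedge\omega)\right),
\end{equation*}
where $C_1,C_2$ are universal constants fixed by the normalization of $Q$. Along our family the sectional curvature is constant, so $\Omega$ is a universal multiple of the metric area $2$-form in each tangent plane; paired against the trace-free $\dot\omega$ the bulk term vanishes pointwise on the smooth locus. Stokes' theorem therefore converts $\tfrac12\tfrac{d}{dt}\int_{\Gamma} Q$ into a sum of two boundary integrals over shrinking tubes $\partial N_\epsilon(T_{2n})$ and $\partial N_\epsilon(s)$ as $\epsilon\to 0$.

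Second I would evaluate each tube integral in the limit. Around $T_{2n}$, choose cone-Fermi coordinates $(r,\theta,z)$ adapted to the singular axis so that the hyperbolic (or Euclidean/spherical) cone metric takes its model form with cone angle $\alpha$; in these coordinates the special-frame connection forms $\theta_{ij}$ have an explicit leading expansion in $r$, and the boundary integral over $\partial N_\epsilon(T_{2n})$ collapses to an angular integral that reads off the rotation of $\Gamma^{\prime}$ along a meridian of the singular axis. The limit produces one term proportional to $\beta\,\dot\alpha$ and one proportional to $\dot\beta\,\alpha$, the precise constants being fixed by the fact that $\beta$ is literally the twisting number of $\Gamma^{\prime}$ around the singular locus. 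The tube integral around $s$ is handled by the same expansion; the Meyerhoff-style correction $-\tfrac{1}{4\pi}\tau(s,\Gamma^{\prime})=\tfrac{1}{4\pi}\int_{\Gamma^{\prime}(s)}\theta_{23}$ is constructed precisely so that its derivative cancels the $\partial N_\epsilon(s)$ contribution.

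Finally, the elementary derivative $\tfrac{d}{dt}\!\left(\tfrac{\beta\alpha}{8\pi^2}\right)=\tfrac{1}{8\pi^2}(\dot\beta\,\alpha+\beta\,\dot\alpha)$ cancels the residual $\dot\beta\,\alpha$ piece and combines with the $\beta\,\dot\alpha$ piece to yield $dI=-\tfrac{1}{4\pi^2}\beta\,d\alpha$. The main obstacle is the explicit tube computation around $T_{2n}$: one must write the Schmidt-orthonormalized special frame in cone-adapted coordinates, expand $\theta_{ij}$ and $\Omega_{ij}$ to the right order in $r$, and verify that the $\epsilon\to 0$ limit of $\int_{\partial N_\epsilon}\operatorname{tr}(\dot\omega\wedge\omega)$ matches the $\beta\alpha/(8\pi^2)$ correction exactly, with no leftover $\dot\beta$ term. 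This matching is the geometric heart of the formula and is what dictates the specific form of the cone-angle correction $-\tfrac{1}{4\pi}(\beta\alpha/2\pi)$ baked into the definition of $I$.
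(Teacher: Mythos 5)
The paper itself does not prove this statement: it is imported verbatim as Theorem~1.2 of~\cite{HLM2} (the variational machinery behind it is developed in~\cite{HLM3}), so your attempt can only be measured against that source. Your overall architecture is indeed the right one and matches the Hilden--Lozano--Montesinos method in outline: differentiate the three summands of $I$, use the variational identity for the Chern--Simons $3$-form, push everything by Stokes onto small tubes about $T_{2n}$ and $s$, and let the derivative of the correction $-\frac{1}{4\pi}\bigl(\frac{\beta\alpha}{2\pi}\bigr)$ kill the $\dot\beta\,\alpha$ contribution, leaving $-\frac{1}{4\pi^2}\beta\,d\alpha$.

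There is, however, a concrete false step, plus an acknowledged hole. The claim that constant curvature makes the bulk term $\operatorname{tr}(\dot\omega\wedge\Omega)$ \emph{vanish pointwise} is wrong: with $\Omega_{ij}=K\,\theta_i\wedge\theta_j$ the bulk term is a fixed multiple of $K\sum_{i<j}\dot\theta_{ij}\wedge\theta_i\wedge\theta_j$, and if, say, $\dot\theta_{12}$ has a nonzero $\theta_3$-component $c$, this equals a nonzero multiple of $cK$ times the volume form --- the trace pairing of an $\mathfrak{so}(3)$-valued $\dot\omega$ against $\Omega$ has no pointwise vanishing mechanism. What is actually true is weaker but sufficient: differentiating the first structure equation $d\theta_i=-\sum_j\theta_{ij}\wedge\theta_j$ in $t$ yields
\begin{equation*}
2\sum_{i<j}\dot\theta_{ij}\wedge\theta_i\wedge\theta_j \;=\; d\Bigl(\sum_i \dot\theta_i\wedge\theta_i\Bigr),
\end{equation*}
so for $K\neq 0$ (the hyperbolic and spherical regimes of $T_{2n}(\alpha)$, which is where the theorem is applied) the bulk term is \emph{exact}, not zero, and Stokes produces a third family of tube integrals, $K\int_{\partial N_\epsilon}\sum_i\dot\theta_i\wedge\theta_i$, entirely absent from your accounting; one must show via the cone-Fermi expansion that these vanish as $\epsilon\to 0$. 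Beyond this, your final paragraph concedes that the evaluation of $\int_{\partial N_\epsilon}\operatorname{tr}(\dot\omega\wedge\omega)$ --- which must produce exactly the combination $\frac{1}{8\pi^2}(\alpha\dot\beta-\beta\dot\alpha)$, with $\beta$ identified as the rotation angle of the longitude holonomy (this identification also needs proof, since it is what Section~5 of the paper uses when substituting $\beta=\mathrm{Im}\,(2\log L)$) --- is left undone and called ``the main obstacle.'' Since that computation and the cancellation at $s$ against $\tau(s,\Gamma^{\prime})$ are the entire content of the theorem, what you have is a plausible program consistent with the cited proof, not a proof.
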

%%%%%%%%%%%%%%%%%%%%%%%%%%%%%%%%%%%%%%%%%%%%%%%%%%%%%%%%
%%%%%%%%%%%%%%%%%%%%%%%%%%%%%%%%%%%%%%%%%%%%%%%%%%%%%%%%

\section{Proof of the theorem~\ref{thm:main}} \label{sec:proof}

For $n \geq 1$ and $M=e^{i \frac{\alpha}{2}}$ ($B=\cos  \frac{\alpha}{2}$), $A_{2n}(M,L)$ and $P_{2n}(V,A)$ have $2n$ component zeros, and for  $n < -1$, $-(2n+1)$ component zeros. The component which gives the maximal volume is the geometric component~\cite{HMP,D1,FK1} and in~\cite{HMP} it is identified. For each $T_{2n}$, there exists an angle $\alpha_0 \in [\frac{2\pi}{3},\pi)$ such that $T_{2n}$ is hyperbolic for $\alpha \in (0, \alpha_0)$, Euclidean for $\alpha=\alpha_0$, and spherical for $\alpha \in (\alpha_0, \pi]$ \cite{P2,HLM1,K1,PW}.
Denote by $D(T_{2n}(\alpha))$ be the set of common zeros of the discriminant of $A_{2n}(L, e^{ \frac{i \alpha}{2}})$ over $L$  and the discriminant of
$P_{2n}(V,\cos  \frac{\alpha}{2})$ over $V$. Then $\alpha_0$ will be one of $D(T_{2n}(\alpha))$. 

 On the geometric component we can calculate
 the Chern-Simons invariant of an orbifold 
$T_{2n}(\frac{2 \pi}{k})$ (mod $\frac{1}{k}$ if $k$ is even or mod $\frac{1}{2k}$ if $k$ is odd), where $k$ is a positive integer such that $k$-fold cyclic covering of $T_{2n}(\frac{2 \pi}{k})$ is hyperbolic:
\begin{align*}
\text{\textnormal{cs}}\left(T_{2n} \left(\frac{2 \pi}{k} \right)\right) 
                       & \equiv I \left(T_{2n} \left(\frac{2 \pi}{k} \right)\right) 
                      \ \ \ \ \ \ \ \ \ \ \ \  \left(\text{mod} \ \frac{1}{k}\right) \\
                        & \equiv I \left(T_{2n}( \pi) \right)
                          +\frac{1}{4 \pi^2}\int_{\frac{2 \pi}{k}}^{\pi} \beta \: d\alpha 
                      \ \ \ \  \left(\text{mod} \ \frac{1}{k}\right) \\
                        & \equiv \frac{1}{2} \text{\textnormal{cs}}\left(L(4n+1,2n+1) \right) \\
&+\frac{1}{4 \pi^2}\int_{\frac{2 \pi}{k}}^{\alpha_0} Im \left(2*\log \left(M^{-2}\frac{A+iV}{A-iV}\right)\right) \: d\alpha \\
& +\frac{1}{4 \pi^2}\int_{\alpha_0}^{\pi}
 Im \left(\log \left(M^{-2}\frac{A+iV_1}{A-iV_1}\right)+\log \left(M^{-2}\frac{A+iV_2}{A-iV_2}\right)\right) \: d\alpha \\
& \left( \text{mod} \ \frac{1}{k}\ \text{if $k$ is even or }  \text{mod} \ \frac{1}{2k}\ \text{if $k$ is odd} \right)
\end{align*}
where the second equivalence comes from Theorem~\ref{thm:schlafli} and the third equivalence comes from the fact that $I \left(T_{2n}(\pi)\right) \equiv \frac{1}{2} \text{\textnormal{cs}}\left(L(4n+1,2n+1) \right)$  $\left(\text{mod }\frac{1}{2}\right)$, Lemma~\ref{lem:pytha}, and geometric interpretations of hyperbolic and spherical holonomy representations.

The fundamental set of the two-bridge link orbifolds are constructed in~\cite{MR2}. 
The following theorem gives the Chern-Simons invariant of the Lens space $L(4n+1,2n+1)$.

\begin{Thm}(Theorem 1.3 of~\cite{HLM2}) \label{thm:Lens}
\begin{align*}
\text{\textnormal{cs}} \left(L \left(4n+1,2n+1\right)\right) \equiv \frac{6n+4}{8n+2} && (\text{mod}\ 1).
\end{align*}
\end{Thm}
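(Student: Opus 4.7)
The plan is to compute $\mathrm{cs}(L(4n+1,2n+1))$ directly from the definition of the Chern--Simons invariant, exploiting that every lens space is a spherical space form. First, I would realize $L(4n+1,2n+1)$ as the quotient $S^3/\mathbb{Z}_{4n+1}$ under the isometric action $(z_1,z_2)\mapsto(\zeta z_1,\zeta^{2n+1}z_2)$ with $\zeta=e^{2\pi i/(4n+1)}$, endowed with the induced round metric. Because $S^3$ has constant sectional curvature $1$, the curvature 2-forms of the Levi--Civita connection satisfy $\Omega_{ij}=\theta_i\wedge\theta_j$ relative to any orthonormal coframe dual to $\Gamma=(e_1,e_2,e_3)$. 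This causes the Chern--Simons form $Q$ of Section~\ref{sec:CSfunction} to collapse to a rational multiple of the Riemannian volume form, so the bulk term becomes an elementary volume computation.

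Next I would choose a fundamental domain $F\subset S^3$ for the deck group, most efficiently a totally geodesic lune of dihedral angle $2\pi/(4n+1)$ along a Hopf great circle, and pick a \emph{special} frame field $\Gamma$ on $F$ compatible with the conventions fixed at the start of Section~\ref{sec:CSfunction} (meridian orientation and extension to the singular locus). The bulk integral $\tfrac12\int_{\Gamma(F)}Q$ is then a straightforward multiple of $\mathrm{vol}(F)/(2\pi^{2})=1/(4n+1)$. This is the cleanest piece of the calculation and contributes the denominator $8n+2=2(4n+1)$ of the answer.

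The remaining ingredients, the trivialization correction $\tau(s,\Gamma')$ and the $\beta\alpha/(2\pi)$ term, are obtained from the deck transformation: gluing the two faces of $F$ rotates $\Gamma$ by an angle determined by $2n+1$ (the ``twist'' of the $\mathbb{Z}_{4n+1}$-action on the second $\mathbb{C}$-factor), producing an explicit boundary integral $-\tfrac{1}{4\pi}\int\theta_{23}$ around the identified loop. Combining the bulk multiple of $1/(4n+1)$ with this boundary contribution and reducing modulo $1$ should collect to the claimed residue $\tfrac{6n+4}{8n+2}$.

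The main obstacle, as is typical for Chern--Simons invariants of lens spaces, is not the integration itself but the bookkeeping of framings, signs, and the mod-$1$ ambiguity. The particular numerator $6n+4$ is not the ``natural'' output of any single piece -- bulk, holonomy, or meridian correction -- but only of their combination, and matching our conventions to those of~\cite{HLM2} (orientation of the meridian $s$ chosen at the start of Section~\ref{sec:CSfunction}, sign of the Chern--Simons form, and choice of branch of $\log$) is what requires genuine care. Once these conventions are nailed down to agree with the normalization used elsewhere in the paper, the result reduces to a finite, explicit computation with no remaining analytic difficulty.
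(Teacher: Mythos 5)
Your plan is not the paper's route at all, and it is worth saying so first: the paper contains no proof of Theorem~\ref{thm:Lens}. The statement is imported verbatim as Theorem 1.3 of~\cite{HLM2}, where it is established within the machinery of the generalized Chern--Simons function, using explicitly constructed spherical fundamental sets for the cyclic coverings of two-bridge orbifolds (cf. the fundamental sets of~\cite{MR2} invoked in Section~\ref{sec:proof}) together with a complete accounting of the frame corrections. A direct computation from the definition, as you propose, is in principle a legitimate alternative, but as written it contains one concrete error and one decisive gap.

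The error: constant curvature does \emph{not} make the bulk term frame-independent. With $\Omega_{ij}=\theta_i\wedge\theta_j$ the Chern--Simons form becomes
\begin{equation*}
Q=\frac{1}{4\pi^2}\left(\theta_{12}\wedge\theta_{13}\wedge\theta_{23}+\theta_{12}\wedge\theta_1\wedge\theta_2+\theta_{13}\wedge\theta_1\wedge\theta_3+\theta_{23}\wedge\theta_2\wedge\theta_3\right),
\end{equation*}
and the connection forms $\theta_{ij}$ depend on the chosen section $\Gamma$ of the frame bundle. For the left-invariant frame on $S^3$ one does get a constant multiple of the volume form, but a \emph{special} frame field in the sense of Section~\ref{sec:CSfunction} --- adapted to the meridian $s$ and to the great circle along which your lune is hinged --- is not left-invariant, and then $\Gamma^{*}Q$ is not a multiple of the volume form. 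Only the full combination of bulk term, $\tau(s,\Gamma^{\prime})$, and the $\beta\alpha/2\pi$ term is well defined modulo the stated constant; the split between bulk and boundary is exactly what shifts when the frame changes. So the assertion that the bulk integral is ``a straightforward multiple of $\mathrm{vol}(F)/(2\pi^2)=1/(4n+1)$'' and by itself supplies the denominator $8n+2$ is unjustified. The gap: the numerator $6n+4$ is never derived. You correctly observe that it arises only from the combination of bulk, holonomy, and meridian corrections, but the proposal stops exactly where the proof must begin: one has to fix the special frame on a concrete fundamental set, compute $\tau(s,\Gamma^{\prime})$ and the rotation angle of the face identification induced by $(z_1,z_2)\mapsto(\zeta z_1,\zeta^{2n+1}z_2)$, and then carry out the mod-$1$ arithmetic with the orientation conventions of Section~\ref{sec:CSfunction}. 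Without that bookkeeping nothing in your argument distinguishes $\frac{6n+4}{8n+2}$ from any other residue with the same denominator; as it stands, the proposal is a program for a proof rather than a proof.
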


%%%%%%%%%%%%%%%%%%%%%%%%%%%%%%%%%%%%%%%%%%%%%%%%%%%%%%%%%
%%%%%%%%%%%%%%%%%%%%%%%%%%%%%%%%%%%%%%%%%%%%%%%%%%%%%%%%%
\section{Chern-Simons invariants of the hyperbolic twist knot orbifolds and of their cyclic coverings}

The table~\ref{table1-1} (resp. the table~\ref{table1-2}) gives the approximate Chern-Simons invariant of the hyperbolic twist knot orbifold, 
$\text{\textnormal{cs}} \left(T_{2n} (\frac{2 \pi}{k})\right)$ for $n$ between $2$ and $9$ (resp. for $n$ between $-9$ and $-2$) and for $k$ between $3$ and $10$, and of its cyclic covering, $cs \left(M_k (T_{2n})\right)$. We used Simpson's rule for the approximation with $10^4$ ($5 \times 10^3$ in Simpson's rule) intervals from $\frac{2 \pi}{k}$ to $\alpha_0$ and $10^4$ ($5 \times 10^3$ in Simpson's rule) intervals from $\alpha_0$ to $\pi$. 
The table~\ref{tab2} gives the approximate Chern-Simons invariant of 
$T_{2n}$ for each n between $-9$ and $9$ except the unknot, the torus knot, and the amphicheiral knot.  We again used Simpson's rule for the approximation with $10^4$ ($5 \times 10^3$ in Simpson's rule) intervals from $0$ to $\alpha_0$ and $10^4$ ($5 \times 10^3$ in Simpson's rule) intervals from $\alpha_0$ to $\pi$. 
We used Mathematica for the calculations. We record here that our data in table~\ref{tab2} and those obtained from  SnapPy match up up to six decimal points.

\begin{table}
\begin{tabular}{ll}
\begin{tabular}{|c|c|c|}
\hline
 $k$ & $\text{\textnormal{cs}} \left(T_4(\frac{2 \pi}{k})\right)$ & $\text{\textnormal{cs}} \left(M_k( T_4)\right)$ \\
\hline
 3 & 0.0875301 \text{} & 0.262590 \text{} \\
 4 & 0.144925 & 0.579699 \\
 5 & 0.0784576 \text{} & 0.392288 \text{} \\
 6 & 0.0351571 & 0.210943 \\
 7 & 0.00506505 \text{} & 0.0354553 \text{} \\
 8 & 0.108039 & 0.864313 \\
 9 & 0.0218112 \text{} & 0.196301 \text{} \\
 10 & 0.0530574 & 0.530574 \\
\hline
\end{tabular}
&
\begin{tabular}{|c|c|c|}
\hline
 $k$ &  $\text{\textnormal{cs}} \left(T_6(\frac{2 \pi}{k})\right)$ & $\text{\textnormal{cs}} \left(M_k (T_6)\right)$ \\
\hline
 3 & 0.0449535 \text{} & 0.134860 \text{} \\
 4 & 0.0876043 & 0.350417 \\
 5 & 0.0165337 \text{} & 0.0826684 \text{} \\
 6 & 0.138167 & 0.829004 \\
 7 & 0.0120078 \text{} & 0.0840545 \text{} \\
 8 & 0.0430876 & 0.344700 \\
 9 & 0.0121250 \text{} & 0.109125 \text{} \\
 10 & 0.0876213 & 0.876213 \\
\hline
\end{tabular}
\end{tabular}

\bigskip

\begin{tabular}{ll}
\begin{tabular}{|c|c|c|}
\hline
 $k$ &   $\text{\textnormal{cs}} \left(T_8(\frac{2 \pi}{k})\right)$ & $\text{\textnormal{cs}} \left(M_k (T_8)\right)$ \\
\hline
 3 & 0.0161266 \text{} & 0.0483799 \text{} \\
 4 & 0.0536832 & 0.214733 \\
 5 & 0.0817026 \text{} & 0.408513 \text{} \\
 6 & 0.103012 & 0.618074 \\
 7 & 0.0481239 \text{} & 0.336867 \text{} \\
 8 & 0.00768503 & 0.0614802 \\
 9 & 0.0322210 \text{} & 0.289989 \text{} \\
 10 & 0.0521232 & 0.521232 \\
\hline
\end{tabular}
&
\begin{tabular}{|c|c|c|}
\hline
 $k$ &  $\text{\textnormal{cs}} \left(T_{10} (\frac{2 \pi}{k})\right)$ & $\text{\textnormal{cs}} \left( M_k (T_{10})\right)$ \\
\hline
3 & 0.162697 \text{} & 0.488091 \text{} \\
 4 & 0.0320099 & 0.128040 \\
 5 & 0.0597580 \text{} & 0.298790 \text{} \\
 6 & 0.0809665 & 0.485799 \\
 7 & 0.0260276 \text{} & 0.182193 \text{} \\
 8 & 0.110559 & 0.884475 \\
 9 & 0.0100766 \text{} & 0.0906893 \text{} \\
 10 & 0.0299660 & 0.299660 \\
\hline
\end{tabular}
\end{tabular}

\bigskip

\begin{tabular}{ll}
\begin{tabular}{|c|c|c|}
\hline
 $k$ &  $\text{\textnormal{cs}} \left(T_{12} (\frac{2 \pi}{k})\right)$ & $\text{\textnormal{cs}} \left(M_k (T_{12})\right)$ \\
\hline
 3 & 0.148360 \text{} & 0.445081 \text{} \\
 4 & 0.0170833 & 0.0683334 \\
 5 & 0.0447221 \text{} & 0.223610 \text{} \\
 6 & 0.0658884 & 0.395330 \\
 7 & 0.0109281 \text{} & 0.0764969 \text{} \\
 8 & 0.0954474 & 0.763579 \\
 9 & 0.0505121 \text{} & 0.454609 \text{} \\
 10 & 0.0148406 & 0.148406 \\
\hline
\end{tabular}
&
\begin{tabular}{|c|c|c|}
\hline
 $k$ &  $\text{\textnormal{cs}} \left(T_{14} (\frac{2 \pi}{k})\right)$ & $\text{\textnormal{cs}} \left(M_k (T_{14})\right)$ \\
\hline
 3 & 0.137750 \text{} & 0.413249 \text{} \\
 4 & 0.00620422 & 0.0248169 \\
 5 & 0.0337900 \text{} & 0.168950 \text{} \\
 6 & 0.0549355 & 0.329613 \\
 7 & 0.0713932 \text{} & 0.499752 \text{} \\
 8 & 0.0844779 & 0.675823 \\
 9 & 0.0395376 \text{} & 0.355839 \text{} \\
 10 & 0.00386414 & 0.0386414 \\
\hline
\end{tabular}
\end{tabular}

\bigskip

\begin{tabular}{ll}
\begin{tabular}{|c|c|c|}
\hline
 $k$ &  $\text{\textnormal{cs}} \left(T_{16} (\frac{2 \pi}{k})\right)$ & $\text{\textnormal{cs}} \left(M_k (T_{16})\right)$ \\
\hline
 3 & 0.129617 \text{} & 0.388850 \text{} \\
 4 & 0.247931 & 0.991725 \\
 5 & 0.0254880 \text{} & 0.127440 \text{} \\
 6 & 0.0466221 & 0.279733 \\
 7 & 0.0630741 \text{} & 0.441518 \text{} \\
 8 & 0.0761526 & 0.609221 \\
 9 & 0.0312096 \text{} & 0.280887 \text{} \\
 10 & 0.0955375 & 0.955375 \\
\hline
\end{tabular}
&
\begin{tabular}{|c|c|c|}
\hline
 $k$ &  $\text{\textnormal{cs}} \left(T_{18} (\frac{2 \pi}{k})\right)$ & $\text{\textnormal{cs}} \left(M_k (T_{18})\right)$ \\
\hline
3 & 0.123198 \text{} & 0.369593 \text{} \\
 4 & 0.241431 & 0.965725 \\
 5 & 0.0189709 \text{} & 0.0948543 \text{} \\
 6 & 0.0400981 & 0.240588 \\
 7 & 0.0565432 \text{} & 0.395803 \text{} \\
 8 & 0.0696194 & 0.556955 \\
 9 & 0.0246862 \text{} & 0.222176 \text{} \\
 10 & 0.0890057 & 0.890057 \\
\hline
\end{tabular}
\end{tabular}

\bigskip

\caption{Chern-Simons invariant of the hyperbolic twist knot orbifold, $\text{\textnormal{cs}} \left(T_{2n} (\frac{2 \pi}{k})\right)$ for $n$ between $2$ and $9$ and for $k$ between $3$ and $10$, and of its cyclic covering, $\text{\textnormal{cs}} \left(M_k (T_{2n})\right)$.}
\label{table1-1}
\end{table}
%%%%%%%%%%%%%%%%%%%%%%%%%%%%%%%%%%%%%%%%%%%%%%%%%%%%%%%%%
\begin{table}
\begin{tabular}{ll}
\begin{tabular}{|c|c|c|}
\hline
 $k$ &  $\text{\textnormal{cs}} \left(T_{-4} (\frac{2 \pi}{k})\right)$ & $\text{\textnormal{cs}} \left(M_k (T_{-4})\right)$ \\
\hline
3 & 0.0200144 \text{} & 0.0600431 \text{} \\
 4 & 0.186811 & 0.747246 \\
 5 & 0.00166667 \text{} & 0.00833333 \text{} \\
 6 & 0.0504622 & 0.302773 \\
 7 & 0.0163442 \text{} & 0.114410 \text{} \\
 8 & 0.116990 & 0.935921 \\
 9 & 0.0292902 \text{} & 0.263612 \text{} \\
 10 & 0.0595432 & 0.595432 \\
\hline
\end{tabular}

&
\begin{tabular}{|c|c|c|}
\hline
 $k$ &  $\text{\textnormal{cs}} \left(T_{-6}(\frac{2 \pi}{k})\right)$ & $\text{\textnormal{cs}} \left(M_k (T_{-6})\right)$ \\
\hline
3 & 0.0749433 \text{} & 0.224830 \text{} \\
 4 & 0.0126376 & 0.0505506 \\
 5 & 0.0873477 \text{} & 0.436738 \text{} \\
 6 & 0.140792 & 0.844753 \\
 7 & 0.0376998 \text{} & 0.263898 \text{} \\
 8 & 0.0862114 & 0.689691 \\
 9 & 0.0133130 \text{} & 0.119817 \text{} \\
 10 & 0.0552937 & 0.552937 \\
\hline
\end{tabular}
\end{tabular}

\bigskip

\begin{tabular}{ll}
\begin{tabular}{|c|c|c|}
\hline
 $k$ &  $\text{\textnormal{cs}} \left(T_{-8}(\frac{2 \pi}{k})\right)$ & $\text{\textnormal{cs}} \left(M_k (T_{-8})\right)$ \\
\hline
 3 & 0.109659 \text{} & 0.328978 \text{} \\
 4 & 0.0564153 & 0.225661 \\
 5 & 0.0330919 \text{} & 0.165460 \text{} \\
 6 & 0.0205610 & 0.123366 \\
 7 & 0.0130371 \text{} & 0.0912595 \text{} \\
 8 & 0.00816423 & 0.0653138 \\
 9 & 0.00482762 \text{} & 0.0434486 \text{} \\
 10 & 0.00244289 & 0.0244289 \\
\hline
\end{tabular}
&
\begin{tabular}{|c|c|c|}
\hline
 $k$ &  $\text{\textnormal{cs}} \left(T_{-10}(\frac{2 \pi}{k})\right)$ & $\text{\textnormal{cs}} \left(M_k (T_{-10})\right)$ \\
\hline
3 & 0.133696 \text{} & 0.401087 \text{} \\
 4 & 0.0832469 & 0.332988 \\
 5 & 0.0603968 \text{} & 0.301984 \text{} \\
 6 & 0.0480386 & 0.288232 \\
 7 & 0.0405999 \text{} & 0.284200 \text{} \\
 8 & 0.0357763 & 0.286210 \\
 9 & 0.0324710 \text{} & 0.292239 \text{} \\
 10 & 0.0301075 & 0.301075 \\
\hline
\end{tabular}
\end{tabular}

\bigskip

\begin{tabular}{ll}
\begin{tabular}{|c|c|c|}
\hline
 $k$ &  $\text{\textnormal{cs}} \left(T_{-12}(\frac{2 \pi}{k})\right)$ & $\text{\textnormal{cs}} \left(M_k (T_{-12})\right)$ \\
\hline
 3 & 0.150597 \text{} & 0.451792 \text{} \\
 4 & 0.101087 & 0.404347 \\
 5 & 0.0784041 \text{} & 0.392020 \text{} \\
 6 & 0.0661095 & 0.396657 \\
 7 & 0.0587029 \text{} & 0.410920 \text{} \\
 8 & 0.0538979 & 0.431183 \\
 9 & 0.0506045 \text{} & 0.455441 \text{} \\
 10 & 0.0482492 & 0.482492 \\
\hline
\end{tabular}

&
\begin{tabular}{|c|c|c|}
\hline
 $k$ &  $\text{\textnormal{cs}} \left(T_{-14}(\frac{2 \pi}{k})\right)$ & $\text{\textnormal{cs}} \left(M_k (T_{-14})\right)$ \\
\hline
 3 & 0.162874 \text{} & 0.488621 \text{} \\
 4 & 0.113753 & 0.455011 \\
 5 & 0.0911448 \text{} & 0.455724 \text{} \\
 6 & 0.0788794 & 0.473276 \\
 7 & 0.0000589596 \text{} & 0.000412717 \text{} \\
 8 & 0.0666914 & 0.533532 \\
 9 & 0.00784780 \text{} & 0.0706302 \text{} \\
 10 & 0.0610519 & 0.610519 \\
\hline
\end{tabular}
\end{tabular}

\bigskip

\begin{tabular}{ll}
\begin{tabular}{|c|c|c|}
\hline
$k$ &  $\text{\textnormal{cs}} \left(T_{-16}(\frac{2 \pi}{k})\right)$ & $\text{\textnormal{cs}} \left(M_k (T_{-16})\right)$ \\
\hline
3 & 0.00545933 \text{} & 0.0163780 \text{} \\
 4 & 0.123196 & 0.492785 \\
 5 & 0.000626850 \text{} & 0.00313425 \text{} \\
 6 & 0.0883767 & 0.530260 \\
 7 & 0.00956397 \text{} & 0.0669478 \text{} \\
 8 & 0.0762009 & 0.609607 \\
 9 & 0.0173633 \text{} & 0.156270 \text{} \\
 10 & 0.0705667 & 0.705667 \\
\hline
\end{tabular}
&
\begin{tabular}{|c|c|c|}
\hline
$k$ &  $\text{\textnormal{cs}} \left(T_{-18}(\frac{2 \pi}{k})\right)$ & $\text{\textnormal{cs}} \left(M_k (T_{-18})\right)$ \\
\hline
 3 & 0.0126607 \text{} & 0.0379822 \text{} \\
 4 & 0.130503 & 0.522012 \\
 5 & 0.00795573 \text{} & 0.0397786 \text{} \\
 6 & 0.0957144 & 0.574286 \\
 7 & 0.0169085 \text{} & 0.118359 \text{} \\
 8 & 0.0835496 & 0.668397 \\
 9 & 0.0247059 \text{} & 0.222353 \text{} \\
 10 & 0.0779144 & 0.779144 \\
\hline
\end{tabular}
\end{tabular}
\caption{Chern-Simons invariant of the hyperbolic twist knot orbifold, $\text{\textnormal{cs}} \left(T_{2n} (\frac{2 \pi}{k})\right)$ for $n$ between $-9$ and $-2$ and for $k$ between $3$ and $10$, and of its cyclic covering, $\text{\textnormal{cs}} \left(M_k (T_{2n})\right)$.}
\label{table1-2}
\end{table}
%%%%%%%%%%%%%%%%%%%%%%%%%%%%%%%%%%%%%%%%%%%%%%%%%%%%%%%%%%
\begin{table} 
\begin{tabular}{cc} 
\begin{tabular}{|c|c|c|}
\hline
2n & $\alpha_0$ & $\text{\textnormal{cs}}\left(T_{2n}\right)$  \\
\hline
 4 & 2.57414 & 0.344023 \text{} \\
 6 & 2.75069 & 0.277867 \text{} \\
 8 & 2.84321 & 0.242222 \text{} \\
 10 & 2.90026 & 0.220016 \text{} \\
 12 & 2.93897 & 0.204869 \text{} \\
 14 & 2.96697 & 0.193882 \text{} \\
 16 & 2.98817 & 0.185550 \text{} \\
 18 & 3.00477 & 0.179014 \text{} \\
\hline
\end{tabular}
&
\begin{tabular}{|c|c|c|}
\hline
2n  & $\alpha_0$ & $\text{\textnormal{cs}}\left(T_{2n}\right)$  \\
\hline
-4 & 2.40717 & 0.346796 \text{} \\
 -6 & 2.67879 & 0.444846 \text{} \\
 -8 & 2.80318 & 0.492293 \text{} \\
 -10 & 2.87475 & 0.0200385 \text{} \\
 -12 & 2.92130 & 0.0382117 \text{} \\
 -14 & 2.95401 & 0.0510293 \text{} \\
 -16 & 2.97825 & 0.0605519 \text{} \\
 -18 & 2.99694 & 0.0679043 \text{} \\
\hline
\end{tabular}
\end{tabular}
\caption{Chern-Simons invariant of $T_{2n}$ for $n$ between $2$ and $9$ and for $n$ between $-9$ and $-2$).}
\label{tab2}
\end{table}
%%%%%%%%%%%%%%%%%%%%%%%%%%%%%%%%%%%%%%%%%%%%%%%%%%%%%%%%%%
%%%%%%%%%%%%%%%%%%%%%%%%%%%%%%%%%%%%%%%%%%%%%%%%%%%%%%%%%%
\medskip

\emph{Acknowledgements.} The authors would like to thank Alexander Mednykh and Hyuk Kim for their various helps, Nathan Dunfield and Daniel Mathews for their prompt helps and anonymous referees.

%%%%%%%%%%%%%%%%%%%%%%%%%%%%%%%%%%%%%%%%%%%%%%%%%%%%%%%%%%%%
%\clearpage

%%%%%%%%%%%%%%%%%%%%%%%%%%%%%%%%%%%%%%%%%%%%%%%%%%%%%%%
\end{document}